\newtheorem{theorem}{Theorem}
\newtheorem{proposition}[theorem]{Proposition}
\newtheorem{lemma}[theorem]{Lemma}
\numberwithin{equation}{section}
\newcommand{\ba}{\begin{array}}
\newcommand{\ea}{\end{array}}
\newcommand{\R}{{\mathbb R}}
\begin{document}
\openup 1.13\jot

\title{Minimum number of non-zero-entries in a $7\times 7$ stable matrix\thanks{Partially supported by NSF grant DMS-1331021. }}

\author{Christopher Hambric\textsuperscript{1}, Chi-Kwong Li\textsuperscript{1},
Diane Christine Pelejo\textsuperscript{2}, Junping Shi\textsuperscript{1} \\\\
{\small \textsuperscript{1} Department of Mathematics, College of William and Mary,\hfill{\ }}\\
\ \ {\small Williamsburg, Virginia, 23187-8795, USA\hfill {\ }}\\
{\small \textsuperscript{2} Institute of Mathematics, College of Science, University of the Philippines Diliman\hfill{\ }}\\
\ \ {\small Diliman, Quezon City 1101, Philippines\hfill {\ }}}
\date{}
\maketitle

\begin{abstract}
We prove that if a $7\times 7$ matrix is potentially stable, then it has at least $11$ non-zero entries. The results for $n\times n$ matrix with $n$ up to $6$ are known previously. We prove the result by making a list of possible associated digraphs with at most $10$ edges, and then use algebraic conditions to show all of these digraphs or matrices cannot be potentially stable. 
In relation to this, we also determine the minimum number of edges in a strongly connected digraph depending on its circumference.
\end{abstract}
\section{Introduction}

The concept of stability of equilibrium is central to the studies of differential 
equations. By using the techniques of linearization and transforming the equilibrium 
to zero, the stability problem is reduced to $u'=Au$, where $u\in \R^n$ and $A$ is a 
real-valued  $n\times n$ matrix. The equilibrium $u=0$ is asymptotically stable if 
each solution $u$ of $u'=Au$ converges to zero as $t\to\infty$. From the theory of 
linear differential equation, this is equivalent to that each eigenvalue 
of $A$ has negative real part. Hence it is desirable to know what kind of matrices 
are stable, and how to design a matrix to be stable \cite{Maybee1969}.

Let $M_n$ be the set of all $n\times n$ matrices with real-valued entries. A matrix $A\in M_n$ is said to be \textit{stable} if, for each of its eigenvalues $\lambda_1,\lambda_2,\ldots,\lambda_n$, ${\rm Re}(\lambda_i)<0$. A system which is modeled by such a matrix $A$ has stable equilibria, and given small perturbations of its initial conditions the system will return to these equilibrium points.

We define the \textit{sign pattern} of a matrix $A=[a_{ij}]$ to be an $n\times n$ matrix  $S(A)=[s_{ij}]$ such that, for $i,j\in\{1,\ldots,n\}$, $s_{ij}=0$ when $a_{ij}=0$, $s_{ij}=-$ when $a_{ij}<0$, and $s_{ij}=+$ when $a_{ij}>0$. If some matrix $A\in M_n$ is found to be stable, then the sign pattern $S(A)$ is said to be \textit{potentially stable}, or \textit{PS} for short.
In the case where $A\in M_n$ is an upper triangular, lower triangular, or diagonal matrix, or when $A$ is permutationally similar to such a matrix, the problem becomes trivial due to the ease of calculating the eigenvalues of these matrices. Therefore we restrict our examination to \textit{irreducible} matrices, or the matrices $A\in M_n$ such that there does not exist a permutation matrix $P$ such that
$$PAP^T=
\begin{bmatrix}
A_{11} & 0\\
A_{12} & A_{22}
\end{bmatrix}, A_{11}\in M_k, A_{22}\in M_{n-k}.$$

The following result has been proved in \cite{Grundy}.

\begin{theorem}\label{thm1} Let the minimum number of nonzero entries required for an $n\times n$ irreducible sign pattern to be potentially stable be given by $m_n$. Then
$$
\left\lbrace\begin{array}{l l}
m_n=2n-1, & n=2,3,\\
m_n=2n-2, & n=4,5,\\
m_n=2n-3, & n=6,\\
m_n\leq 2n-(\lfloor\frac{n}{3}\rfloor+1), & n\geq 7.
\end{array}\right.
$$
\end{theorem}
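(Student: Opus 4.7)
The plan is to prove the theorem in two directions: matching upper bounds via explicit constructions, and lower bounds via combined structural and algebraic arguments. The small cases $n\in\{2,\dots,6\}$ and the general case $n\geq 7$ are treated separately.

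\textbf{Upper bounds.} For each $n$, I would exhibit an explicit $n\times n$ sign pattern with exactly the claimed number of nonzero entries, together with a numerical realization whose characteristic polynomial is Hurwitz. For $n\in\{2,\dots,6\}$ these can be found by inspection: start from a Hamiltonian cycle and insert a few negative diagonal entries, plus possibly one off-diagonal chord, with magnitudes tuned by hand so that the Routh--Hurwitz inequalities are strict. For $n\geq 7$, the plan is to take the $n$-cycle $1\to 2\to\cdots\to n\to 1$ (contributing $n$ edges), add $\lfloor n/3\rfloor$ chords so that the digraph decomposes into overlapping short cycles, and fill in the remaining slots with negative self-loops; a direct count gives $2n-(\lfloor n/3\rfloor+1)$ nonzero entries, and the magnitudes in each ``block'' of three consecutive vertices can be tuned to make all coefficients $c_k$ of the characteristic polynomial positive.

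\textbf{Lower bounds.} The workhorse is the Routh--Hurwitz criterion: a stable $A\in M_n$ has every coefficient $c_k$ of $\det(\lambda I-A)=\lambda^n-c_1\lambda^{n-1}+\cdots+(-1)^n c_n$ strictly positive, and every Hurwitz minor strictly positive. Since $c_k$ equals the sum of $k\times k$ principal minors of $A$, each coefficient condition translates via the Coates formula into a signed sum over cycle covers of induced $k$-vertex subgraphs of $D(A)$. Two immediate consequences are that irreducibility forces at least $n$ non-loop edges in $D(A)$, and $c_1>0$ forces at least one negative self-loop, giving a trivial baseline of $n+1$ entries. The remaining gap between $n+1$ and $m_n$ is closed, for each $n\in\{2,\dots,6\}$, by enumerating all isomorphism classes of strongly connected digraphs on $n$ vertices with one fewer edge than the claimed $m_n$, and ruling each one out by exhibiting a Routh--Hurwitz condition that no sign-and-magnitude assignment to the edges and loops can satisfy.

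\textbf{Main obstacle.} The case analysis for the lower bounds will be the main obstacle. The number of candidate digraphs grows rapidly with $n$, and each must be excluded by a tailored argument combining the cycle structure of $D(A)$ with the possible sign and magnitude choices on its edges and loops; many patterns require ruling out not merely by a sign obstruction but by showing that some $c_k$ must vanish or change sign as another is forced positive. Organizing the enumeration by the circumference of $D(A)$ --- which ties directly into the companion minimum-edge-count result announced in the abstract --- looks like the cleanest route, since digraphs of large circumference quickly acquire too many edges to be candidates, leaving the hard cases concentrated on those whose longest cycle is short.
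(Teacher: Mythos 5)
First, a point of orientation: the paper does not prove this statement at all --- it is quoted verbatim from reference \cite{Grundy} (Grundy, Olesky and van den Driessche), and the paper's own contribution begins only with Theorem \ref{thm2}. So there is no in-paper proof to compare yours against; your proposal has to be judged on its own terms, and on those terms it is a strategy outline rather than a proof. Both halves are left unexecuted: for the upper bounds you say the patterns ``can be found by inspection'' and the magnitudes ``tuned by hand,'' but exhibiting and verifying these constructions is precisely the substantive content of \cite{Grundy}; for the lower bounds you defer entirely to an enumeration of digraphs that you do not carry out. Nothing you have written is checkable.

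There is also a concrete soft spot in your $n\geq 7$ construction. For a sign pattern to be potentially stable, every $E_k$ ($k=1,\dots,n$) must be nonzero, which requires that for every $k$ the digraph contain a family of pairwise vertex-disjoint cycles whose lengths sum to $k$. A Hamiltonian $n$-cycle with $\lfloor n/3\rfloor$ chords and $n-2\lfloor n/3\rfloor-1$ loops does not automatically satisfy this: chords off a Hamiltonian cycle create short cycles that all overlap the long one, and the paper's own Section 3 (e.g.\ Case 1.1, where a $7$-cycle plus two loops plus one chord is shown to necessarily miss some minor size) illustrates how restrictive the disjointness requirement is. You would need to specify the chord and loop placement exactly and verify the full set of correct minors --- and then still verify actual stability, since (as the paper's $3\times 3$ example shows) positive characteristic-polynomial coefficients do not suffice. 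Your sign conventions also need care: you assert every $c_k>0$ while simultaneously identifying $c_k$ with the sum of $k\times k$ principal minors, which for a stable matrix has sign $(-1)^k$.
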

Hence the value of $m_n$ for $n=2,3,4,5,6$ was determined in Theorem \ref{thm1}, as well as an upper bound for $m_n$ for any $n\geq 6$ via an explicit construction. Previously other partial results have been obtained for the cases $3\le n\le 5$ \cite{Johnson1989,Johnson1997}. 
In this paper we prove the following theorem:
\begin{theorem}\label{thm2}
$$
m_7=2(7)-3=11.
$$
\end{theorem}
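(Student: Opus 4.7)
The inequality $m_7 \le 11$ is immediate from Theorem~\ref{thm1}, since $\lfloor 7/3 \rfloor + 1 = 3$ gives $2(7)-3=11$. My plan is therefore to establish the matching lower bound $m_7 \ge 11$: every $7\times 7$ irreducible sign pattern with at most $10$ nonzero entries must fail to be potentially stable.

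The first step is to translate the problem into digraph language. To a sign pattern $S=[s_{ij}]$ I associate the digraph $D(S)$ on the vertex set $\{1,\dots,7\}$ with an arc from $j$ to $i$ whenever $s_{ij}\ne 0$, loops encoding diagonal entries. Irreducibility of $S$ is the strong connectivity of $D(S)$, and the count of nonzero entries of $S$ is exactly the number of arcs (loops included) of $D(S)$. Thus I am reduced to listing all strongly connected digraphs on $7$ vertices with at most $10$ arcs, and then showing that, for each such $D$, no assignment of signs and magnitudes to its arcs can yield a stable matrix. To keep the enumeration tractable, I would stratify by the circumference $c\in\{2,\dots,7\}$ (length of the longest simple cycle) and by the number of loops, and invoke the secondary result flagged in the abstract: a sharp lower bound on the number of arcs of a strongly connected digraph in terms of its circumference. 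This eliminates whole families at once and leaves a finite explicit list of candidate digraphs.

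The second step, which carries most of the work, is to rule out each surviving digraph $D$ algebraically. For a sign-consistent matrix $A$ on $D$, write the characteristic polynomial $p_A(\lambda) = \lambda^7 + c_1\lambda^6 + \cdots + c_7$, where $(-1)^k c_k$ is the sum of the $k\times k$ principal minors of $A$. By the Routh--Hurwitz criterion, stability requires every $c_k>0$ together with positivity of all Hurwitz determinants. Each $c_k$ is a short signed sum of products read off the principal subdigraphs of $D$ of order $k$, so for sparse $D$ one of the following typically happens: (i) some $c_k$ vanishes identically because $D$ has no $k$-vertex principal subdigraph admitting a cycle cover; (ii) the signs of the surviving terms in $c_k$ are forced to conflict with $c_k>0$ once the negative diagonal needed for $c_1>0$ is imposed and the cycle-cover sign forces $c_7>0$; or (iii) the $c_k$'s are individually positive but a Hurwitz determinant is non-positive regardless of magnitudes.

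The main obstacle will be the volume of case-work. I would mitigate this by using the natural equivalences preserving stability --- transpose, vertex relabeling, and diagonal sign flips --- to reduce each isomorphism class to a canonical representative, and by grouping cases according to the multiset of simple cycle lengths of $D$, which is precisely the data controlling the support of the $c_k$. The Hamiltonian-circumference case $c=7$ is special: $c_7$ is essentially a single cycle-cover product, so the sign coupling between $c_1$ and $c_7$ drives the contradiction. The lower-circumference cases are expected to fail instead by forced vanishing or forced sign conflict in a middle coefficient. Combining these eliminations proves $m_7\ge 11$ and hence $m_7=11$.
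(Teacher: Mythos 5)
Your plan follows essentially the same route as the paper: the upper bound from Theorem~\ref{thm1}, reduction to strongly connected digraphs on $7$ vertices with $10$ arcs stratified by circumference (using the sharp edge-count bound $e_{n,k}$ to eliminate circumferences $2$ and $3$ outright), a correct-minors filter, and Routh--Hurwitz-type necessary conditions to kill the surviving candidates. The caveat is that your proposal is only an outline: the actual content of the proof is the explicit enumeration (fifteen nonequivalent candidate digraphs in the paper) and the case-by-case verification that each one violates a specific inequality of the form $c_ic_j-c_{i+j}>0$ (in every case the paper finds $c_ic_j-c_{i+j}\le 0$ identically), so until that enumeration and computation is carried out nothing is yet established; also, your expectation that the Hamiltonian case is driven by a $c_1$--$c_7$ coupling is off --- in the paper those cases fall to $c_1c_2-c_3\le 0$ or $c_2c_4-c_6=0$.
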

Note that Theorem \ref{thm1} has shown that $11$ is an upper bound. So here in order to prove this minimum, we need only show that there cannot exist a potentially stable $7\times7$ sign pattern with only $10$ nonzero entries. Note that if there were a potentially stable $7\times7$ sign pattern with fewer than $10$ nonzero entries, then we would similarly be able to construct a potentially stable pattern with $10$ nonzero entries by adding additional nonzero entries to an existing potentially stable pattern. Thus it is sufficient to prove that no potentially stable pattern with only $10$ nonzero entries exists.

In order to prove that no such sign pattern exists, we first construct a list of all digraphs with $7$ vertices and $10$ edges which allow for correct minors (as defined by the relationship between cycles in the graph and the minors of the associated matrix in subsection 2.2). This construction is given in Section 3. Once we have constructed this list of digraphs, we will construct the associated set of nonequivalent matrix sign patterns which have correct minors. Fore that purpose we utilize a variant of Routh-Hurwitz stability criterion to show that none of these candidate sign patterns have a stable realization (see Section 4). From this we will conclude that the minimum number of nonzero entries must be equal to $11$.


\section{Preliminaries}

\subsection{Digraphs}





We define the \textit{digraph} of an $n\times n$ matrix $A=(a_{ij})$ to be a directed graph with vertex set $V_n=\{1,\ldots,n\}$, and for each $i,j\in V_n$, there exists an edge from vertex $i$ to vertex $j$ if and only if $a_{ij}\neq 0$.
For a digraph, we define a \textit{path} as an ordered set of edges such that, for some vertices $i,j,l\in\{1,\ldots,n\}$, if the $m^{th}$ edge in the set is defined by $(i,j)$, then the $(m+1)^{th}$ edge is defined by $(j,l)$. We define the \textit{length} of a path as the number of edges in the path. If for each pair of vertices $p$ and $q$, such that $p\in\{1,\ldots,n\},q\in\{1,\ldots,n\}\setminus\{p\}$, in a given digraph there exists a path which begins at $p$ and ends at $q$, we say that the digraph is \textit{strongly connected}. It is the case that for any $A\in M_n$, $A$ is irreducible if and only if the digraph of $A$ is strongly connected \cite{brualdi}. We define a \textit{cycle} to be a path which begins and ends at the same point, and which only intersects itself at this point. We refer to a cycle of length $1$ as a \textit{loop}. Also note that a permutation similarity which swaps the $i^{th}$ and $j^{th}$ rows/columns of $A$ is reflected in the digraph of $A$ by swapping the labels of the $i^{th}$ and $j^{th}$ vertices of the digraph.

The \textit{circumference} of a digraph $G$ is defined as the length of the longest cycle present within the graph. We write this as $circ(G)$. Note that as the circumference decreases, the minimum number of edges needed to be strongly connected increases. The following theorem gives the minimum number of edges of a digraph $G$ on $n$ vertices given that $circ(G)=k$. 
\begin{theorem}
 Let $k,n$ be integers such that $2\leq k\leq n$ and  $n=a(k-1)+b$ 
 for some $a>0$ and $0\leq b< k-1$, 
 define $e_{n,2}=2(n-1)$ and for $k>2$, 
 define 
\[e_{n,k}=\left\{\begin{array}{ll}
ka-1 & \mbox{ if } b=0, (a\geq 2)\\
ka & \mbox{ if } b=1\\
ka+b & \mbox{ if } b>1\\
\end{array}\right..\]
If $G$ is a strongly connected digraph with $n$ vertices,
$circ(G)=k$, then $|E|\geq e_{n,k}$. Moreover, the bound is best possible,
i.e., there is a graph $G_0$ with $n$ vertices, $circ(G_0) = k$ and 
$e_{n,k}$ edges.
\end{theorem}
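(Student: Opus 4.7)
The plan is to prove both halves separately: an explicit extremal construction giving the upper bound $|E|\leq e_{n,k}$, and a matching lower bound via an ear decomposition. For the construction with $k\geq 3$, I would form a chain of cycles in which each consecutive pair shares exactly one vertex. When $b\geq 1$, take $a$ cycles of length $k$ arranged in such a chain, using $a(k-1)+1$ vertices and $ak$ edges; if $b\geq 2$, attach one more cycle of length $b$ at the free end, adding $b-1$ vertices and $b$ edges. When $b=0$ and $a\geq 2$, replace the last cycle by one of length $k-1$, giving $ka-1$ edges. For $k=2$, take any tree on $n$ vertices and replace each undirected edge by a pair of opposite directed edges. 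Strong connectivity and the correct circumference are immediate from each construction.

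For the lower bound when $k\geq 3$, I would invoke the standard fact that every strongly connected digraph admits an ear decomposition $(C_0,P_1,\ldots,P_s)$ starting from any prescribed cycle; take $C_0$ of maximum length, namely $k$. Writing $p_i\geq 0$ for the number of new internal vertices added by ear $P_i$, each ear contributes $p_i+1$ edges, and since $\sum p_i=n-k$ we get $|E(G)|=k+\sum(p_i+1)=n+s$. The task reduces to showing $s\geq \lceil (n-k)/(k-1)\rceil$. The key observation is $p_i\leq k-1$ for every $i$: a closed ear is itself a simple cycle of length $p_i+1\leq k$; for an open ear from $u$ to $v$, a $v$-to-$u$ path of length $\ell\geq 1$ inside the already-constructed, strongly connected $H_{i-1}=C_0\cup P_1\cup\cdots\cup P_{i-1}$ combines with $P_i$ into a simple cycle in $G$ of length $p_i+1+\ell\leq k$, forcing $p_i\leq k-2$. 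This yields $s(k-1)\geq n-k$, and a short case analysis on $b\in\{0,1\}$ versus $b\geq 2$ verifies that $n+\lceil (n-k)/(k-1)\rceil$ equals $ka-1$, $ka$, or $ka+b$ respectively, matching $e_{n,k}$.

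The case $k=2$ needs a separate, easier argument: circumference $2$ forbids simple cycles of length $\geq 3$, so every non-loop edge $u\to v$ of a strongly connected $G$ must be paired with the reverse edge $v\to u$, since otherwise a return trip from $u$ to itself through $v$ would need a cycle of length $\geq 3$. The non-loop edges thus pair up to double a connected undirected graph on $n$ vertices, giving at least $2(n-1)$ of them. The main technical point in the ear-decomposition argument is the simplicity of the cycle built from an open ear plus an $H_{i-1}$-path; this is automatic because the new internal vertices of $P_i$ lie outside $H_{i-1}$ by construction, so the two pieces share only their endpoints $u$ and $v$. Everything else reduces to arithmetic matching of the ceiling with the piecewise formula.
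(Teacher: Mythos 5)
Your argument is correct, and the lower-bound half is genuinely different from the paper's. The paper proves $|E|\geq e_{n,k}$ by induction: for $k=2$ it contracts a $2$-cycle into a single vertex, and for $k\geq 3$ it contracts a longest cycle into a single vertex, checks that contraction preserves strong connectivity and does not increase the circumference beyond $k$, and reduces to a graph on $n-k+1$ vertices with at least $k$ fewer edges. Your ear-decomposition route replaces that induction with the single identity $|E|=n+s$ and the observation that every ear has at most $k-1$ new internal vertices (at most $k-2$ for an open ear, since closing it through the strongly connected $H_{i-1}$ produces a simple cycle of length at most $k$); the ceiling arithmetic $n+\lceil (n-k)/(k-1)\rceil=e_{n,k}$ then checks out in all three cases, as does your separate $k=2$ argument that every non-loop edge must be doubled. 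This is arguably cleaner: it avoids the paper's somewhat delicate verification that the contracted graph can be rearranged into a strongly connected graph of circumference exactly $k$ satisfying the induction hypothesis, at the cost of invoking the (standard) fact that an ear decomposition can be started from any prescribed cycle. Your extremal constructions are essentially identical to the paper's chain-of-cycles $G_0$ and the doubled tree, and the edge and vertex counts match $e_{n,k}$ in every case. One cosmetic remark: loops enter the decomposition as closed ears with $p_i=0$, so your inequality $p_i\leq k-1$ and the identity $|E|=n+s$ cover them without modification; it is worth saying so explicitly since the digraphs in this paper routinely carry loops.
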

\noindent\textit{Proof:} Let $G$ be a strongly connected digraph with vertex set $V_n=\{1,\ldots,n\}$, edge set $E$ and $2\leq circ(G)=k$. \\
\textbf{Case 1:} Suppose $k=2$. We proceed by induction on $n$. When $n=2$, we have $(1,2),(2,1)\in E$, and hence $|E|\geq e_{2,2}=2$. Suppose $n\geq 3$ and any strongly connected graph $\bar{G}(V_{n-1},\bar{E})$ with $circ(G)=2$ satisfies $|\bar{E}|\geq e_{n-1,2}= 2(n-2)$. Assume that $|E|< e_{n,2}=2(n-1)$. Note that we can relabel the vertices so that $(n-1,n),(n,n-1)\in E$. Now, \[E\subseteq V_n\times V_n=\underbrace{(V_{n-1}\times V_{n-1})}_{S_1} \cup\underbrace{(V_{n-2}\times \{n\})}_{S_2} \cup \underbrace{(\{n\} \times V_{n-2})}_{S_3} \cup \underbrace{\{(n-1,n),(n,n-1),(n,n)\}}_{S_4} \]
which is a disjoint union of sets. Thus 
\begin{equation}
|E|=|E\cap S_1|+|E\cap S_2|+|E\cap S_3| +|E\cap S_4|< 2(n-1)
\end{equation}
Since $(n-1,n),(n,n-1)\in E$, we also have $|E\cap S_4|\geq 2$ and so 
\[|E\cap S_1|+|E\cap S_2|+|E\cap S_3|<2(n-2)\]
Now, define the edge set $\bar{E}\subseteq V_{n-1}\times V_{n-1}$ as follows 
\[\bar{E}=\underbrace{(E\cap S_1)}_{T_1}\cup \underbrace{\{(j,n-1) \ | \ (j,n)\in E\cap S_2\}}_{T_2} \cup \underbrace{ \{(n-1,j) \ | \ (n,j)\in E\cap S_3\}}_{T_3}\]
That is, we obtain $\bar{E}$ by removing the edges $(n-1,n),(n,n-1),(n,n)$ from $E$ and morphing vertices $n$ and $n-1$ into one vertex, labeling it as $n-1$. Thus, 
\[|\bar{E}| \leq |E\cap S_1|+|E\cap S_2|+|E\cap S_3|<2(n-2)\]
It is easy to verify that if $i,j\in V_{n-1}$ and there is a path from $i$ to $j$ in $G$, then there is a path from $i$ to $j$ in $\bar{G}$. Thus, $\bar{G}$ is strongly connected. Also, if there is a cycle of length $k$ in $\bar{G}$, then there is a cycle of length greater than or equal to $k$ in $G$. Thus,  $circ(G)=2$. This contradicts the induction hypothesis. By mathematical induction, $|E|\geq e_{n,2}$.\medskip\\
\textbf{Case 2:}
Next, assume $3\leq k\leq n$ and $n=a(k-1)+b$. We will prove the theorem by induction on $a$. We start with the following base cases for the (i) $b=0$, that is $a=2$ and $n=2(k-1)$; (ii) $b=1$, that is, $a=1$ and $n=k$; and (iii) $b>1$, that is $a=1$ and $n=k+b-1$.
\begin{enumerate}
\item[(i)] Let $n=2(k-1)$. That is, $a=2$ and $b=0$. Then $e_{n,k}=2k-1=n+1$. We are assuming $G$ is strongly connected and $\mbox{circ}(G)= k<n$. We can relabel the vertices so that there is a $k$-cycle formed by vertices $V_n-V_{n-k}$, consisting of $k$ edges. Additionally, there must an outgoing edge from each vertex $j\in V_{n-k}$. This gives us additional $n-k$ distinct edges. Finally, there must be an outgoing edge from a vertex of $V_n-V_{n-k}$ going to a vertex in $V_{n-k}$. Thus $|E|\geq k+n-k+1=n+1=e_{n,k}$.  
\item[(ii)] Let $n=k$. That is $a=1$ and $b=1$ and $e_{n,k}=k$. It is clear that $|E|\geq k=e_{n,k}$ since there must be an outgoing (equivalently, incoming) edge for each vertex. 
\item[(iii)] Let $n=k+b-1$ for some $b>1$. That is $a=1$ and $e_{n,k}=k+b=n+1$. Using the same argument for $n=2(k-1)$, we get that $|E|\geq n+1=e_{n,k}$.
\end{enumerate} 
Assume that $a\geq 2$ when $b>0$ and $a\geq 3$ when $b=0$. Suppose further that any strongly connected graph $\bar{G}=(V_{n-k+1},\bar{E})$ with $circ(\bar{G})=k$ satisfies $|\bar{E}|\geq e_{n-k+1,k}$. Suppose $|E|< e_{n,k}$. We can relabel the vertices so that $\{n-k+1,\ldots, n\}$ form a $k$-cycle in $G$, where $k<n$. We will define the digraph $\hat{G}$ with vertex set $V_k$ and edge set $\hat{E}=S_1\cup S_2\cup S_3$, where 
\[S_1=E\cap \Big(V_{n-k+1}\times V_{n-k+1}\Big)\]
\[S_2=\Big\{(j,n-k+1)\ |\ (j,s)\in E\cap \Big(V_{n-k}\times (V_n-V_{n-k+1}) \Big)\Big\}\]
\[S_3=\Big\{(n-k+1,j)\ |\ (j,s)\in E\cap \Big((V_n-V_{n-k+1})\times V_{n-k}\Big) \Big\}\]
That is, we remove the edges contained in the $k$-cycle and collapse vertices $n-k+1,\ldots, n$ into one vertex labeled by $n-k+1$. Then $|\hat{E}|\leq |E|-k<e_{n,k}-k=e_{n-k+1,k}$. Note that $\hat{G}$ is strongly connected and $circ(\hat{G})\leq k$. Note that from $\hat{G}$, we can define a strongly connected digraph $\bar{G}$ with $circ(G)=k$ by rearranging its edges, relocating and realigning the edges if necessary without removing or introducing a new edge. This contradicts the induction hypothesis. By mathematical induction, $|E|\geq e_{n,k}$.
\\
For the last assertion, consider $G_0$ to be the digraph on $V_n$ constructed as follows. For $k = 2$, let the edge set of $G_0$ be $E=\{(i,i+1),(i+1,i)\ | \ i=1,\ldots, n-1\}$. 
\\
For $k > 2$, construct a $k$-cycle $1\rightarrow 2 \rightarrow\cdots \rightarrow k\rightarrow 1$;
if there are at least $k-1$ vertices left, construct another cycle $k\rightarrow k+1\rightarrow  \cdots\rightarrow 2k-1\rightarrow k$;
if there are at least $k-1$ vertices construct another cycle $2k-1 \rightarrow 2k \rightarrow \cdots
\rightarrow 3k-2\rightarrow 2k-1$, until we have either $k-2$ vertices left (when $b=0$) or $b-1$ vertices left with $1 \le b  < k-1$ vertex. For the former case or if $b> 0$, use a vertex in the last $k$-cycle and the remaining vertices
to form either a $k-1$-cycle or a $b$-cycle.
 \hfill $\square$

Suppose $G$ is a strongly connected digraph with $n$ vertices, edge set $E$, circumference $k$ and contains $m$ loops. Note that removing the loops does not change the strong connectivity of $G$. It follows from the preceding theorem that 
$|E|-m \geq e_{n,k}$.

\subsection{Minors}

The following lemma is from elementary algebra and it is useful for better defining the properties of the characteristic polynomial of a stable matrix:
\begin{lemma}\label{deg2} Let $p(x)=x^2+cx+d$ be a quadratic polynomial with  real valued coefficients $c,d$. Then $p$ has roots $\lambda_1 ,\lambda_2$ with  ${\rm Re}(\lambda_1)<0$ and ${\rm Re}(\lambda_2)<0$ if and only if $c>0$ and $d>0$.
\end{lemma}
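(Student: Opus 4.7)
The plan is to apply Vieta's formulas and split into cases based on the sign of the discriminant $c^2-4d$. For a monic quadratic, Vieta gives $c=-(\lambda_1+\lambda_2)$ and $d=\lambda_1\lambda_2$, which directly expresses the hypothesis on $c,d$ in terms of the roots.

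If $c^2-4d\ge 0$, both roots are real, so $\mathrm{Re}(\lambda_i)=\lambda_i$. The forward direction is immediate: two negative real numbers have negative sum and positive product, giving $c>0$ and $d>0$. For the converse, $d=\lambda_1\lambda_2>0$ forces the two real roots to have the same sign, and then $\lambda_1+\lambda_2=-c<0$ forces both to be negative.

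If $c^2-4d<0$, the roots form a complex conjugate pair $\lambda_{1,2}=\alpha\pm i\beta$ with $\beta\ne 0$. Then $c=-2\alpha$ and $d=\alpha^2+\beta^2>0$ automatically. Hence $\mathrm{Re}(\lambda_i)=\alpha<0$ is equivalent to $c>0$, while $d>0$ holds for free, so the equivalence again follows.

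Combining the two cases settles both directions of the iff. No real obstacle is expected; this is a textbook Routh--Hurwitz statement in degree two, included here to serve as the base case for the polynomial-coefficient conditions invoked later in Section 4.
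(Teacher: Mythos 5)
Your proof is correct. Note that the paper itself offers no proof of this lemma --- it is stated as a fact ``from elementary algebra'' --- so there is nothing to compare against; your argument via Vieta's formulas ($c=-(\lambda_1+\lambda_2)$, $d=\lambda_1\lambda_2$) with the case split on the sign of the discriminant is a standard and complete way to supply the omitted justification, and both directions of the equivalence are handled correctly in each case.
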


An $m\times m$ \textit{principal submatrix} of $A$ is a matrix $B=[b_{ij}]\in M_m$, $1\leq m\leq n$ such that $b_{ij}=a_{v_iv_j}$ for some $v_1<\ldots<v_m\in\{1,\ldots,n\}$. A \textit{principal minor} of $A$ is defined as the determinant of some principal submatrix $B=[b_{ij}]$ of $A$. We denote the $m\times m$ principal minor of $A$ indexed by $v_1<\ldots<v_m\in\{1,\ldots,n\}$ as $M(A)_{v_1,\ldots,v_m}$. For example,
$$
A=\begin{bmatrix}
1 & 1 & 0 \\
1 & 0 & 1 \\
0 & 1 & 0
\end{bmatrix} \quad \Longrightarrow \quad
M(A)_{23}=\det\left(
\begin{bmatrix}
0 & 1 \\
1 & 0
\end{bmatrix}\right),\quad M(A)_{13}=\det\left(
\begin{bmatrix}
1 & 0 \\
0 & 0
\end{bmatrix}\right).
$$
There is a direct relationship 
between the minors of a matrix and its eigenvalues. The sum of all $k\times k$ 
principal minors of a matrix $A$ is equal to the sum of all products of unique 
combinations of $k$ eigenvalues of $A$. That is,
\begin{equation}\label{E_k}
E_k=\sum_{1\leq v_1< \ldots< v_k\leq n} M(A)_{v_1,\ldots,v_k}=\sum_{1\leq u_1< \ldots< u_k\leq n} \lambda_{u_1}\cdots\lambda_{u_k}.
\end{equation}
Furthermore, the coefficient of $t^{n-k}$ in the characteristic polynomial 
$P_A(t)=\det(tI-A)$ of the $A$ is equal to $(-1)^k E_k$. Due to the relationship 
between the minors and the eigenvalues of a matrix, we have the following lemma, 
which is well known:
\begin{lemma}\label{lem6}
If $A\in M_n(\mathbb{R})$ is stable, then the following are true:
\begin{enumerate}
\item For all $k=1,\ldots, n$, the sign of the sum of the $k\times k$ minors of $A$, $E_k$, is $(-1)^k$.
\item The characteristic polynomial of $A$, $$P_A(t)=\det(tI-A)=\sum_{k=0}^n(-1)^kE_kt^{n-k},$$ has all positive coefficients.
\end{enumerate}
\end{lemma}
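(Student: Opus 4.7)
The plan is to observe that the two statements in Lemma \ref{lem6} are essentially equivalent via the identity
$P_A(t)=\sum_{k=0}^{n}(-1)^{k}E_{k}t^{n-k}$
already recorded in the excerpt: the coefficient of $t^{n-k}$ in $P_A$ is exactly $(-1)^{k}E_{k}$, so showing that every coefficient of $P_A$ is positive is the same as showing that $\operatorname{sgn}(E_{k})=(-1)^{k}$ for each $k=1,\ldots,n$. I will therefore focus on proving statement (2) and read off statement (1) for free.

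To prove (2), I would factor $P_{A}$ using the fact that $A$ is a real matrix, so its non-real eigenvalues occur in complex conjugate pairs. Grouping the eigenvalues into the negative reals $-r_{1},\ldots,-r_{p}$ (with $r_{i}>0$) and the conjugate pairs $\alpha_{j}\pm i\beta_{j}$ for $j=1,\ldots,q$ (with $\alpha_{j}<0$ and $p+2q=n$), I obtain the real factorization
$$
P_{A}(t)=\prod_{i=1}^{p}(t+r_{i})\cdot\prod_{j=1}^{q}\bigl(t^{2}-2\alpha_{j}t+(\alpha_{j}^{2}+\beta_{j}^{2})\bigr).
$$
Each linear factor $t+r_{i}$ has positive coefficients since $r_{i}>0$. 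For each quadratic factor, $-2\alpha_{j}>0$ and $\alpha_{j}^{2}+\beta_{j}^{2}>0$; this is precisely the forward direction of Lemma \ref{deg2}, applied to the monic quadratic whose roots $\alpha_{j}\pm i\beta_{j}$ both have negative real part.

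The final step is to note that a product of monic polynomials all of whose coefficients are strictly positive is itself a monic polynomial with strictly positive coefficients: inductively, if $f(t)$ and $g(t)$ both have strictly positive coefficients, then so does $f(t)g(t)$ because every coefficient of the product is a sum of products of positive numbers, and at least one such product contributes to each degree from $0$ up to $\deg f+\deg g$. Applying this to the factorization above gives statement (2), and then statement (1) follows immediately from the coefficient identification described in the first paragraph.

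There is essentially no obstacle here; the result is classical and the argument is entirely routine once the conjugate-pair grouping is made. The only care needed is the strict positivity (not just nonnegativity) of the coefficients in the product, which is automatic because the leading coefficient is $1$ and the constant term $\prod_{i}r_{i}\cdot\prod_{j}(\alpha_{j}^{2}+\beta_{j}^{2})$ is strictly positive, forcing every intermediate coefficient to be strictly positive as well.
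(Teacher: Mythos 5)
Your proof is correct. The paper itself offers no proof of Lemma \ref{lem6} (it is stated as ``well known''), so there is nothing to compare line by line; your argument --- pairing the real eigenvalues $-r_i$ with the conjugate pairs $\alpha_j\pm i\beta_j$, invoking Lemma \ref{deg2} for the quadratic factors, and multiplying out --- is the standard one and is exactly the factorization the paper itself relies on later in the proof of Lemma \ref{ineq}, so it fits the paper's machinery. One small caution: your closing sentence suggests that strict positivity of the leading and constant coefficients ``forces'' the intermediate coefficients to be positive, which is not a valid inference on its own; fortunately your preceding inductive argument (each coefficient of $fg$ is a nonempty sum of products of strictly positive coefficients of $f$ and $g$) already establishes strict positivity, so the proof stands without that remark.
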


Note that the above lemma gives us a necessary condition for a given matrix $A$ to be stable. This condition will be very important in our work. If a given sign pattern can be realized by a real valued matrix $A$ which meets the condition that  the sign of the sum of the $k\times k$ minors of $A$ is $(-1)^k$, then we say that this sign pattern has \textit{correct minors}. If for some $k$, the sum of $k\times k$ minors is equal to zero, then that sign pattern cannot be PS, as this would imply that either some of its eigenvalues are positive and some are negative, or that at least one of the eigenvalues is equal to zero.

The condition on the coefficients of $P_A(t)$ is necessary for the stability of $A$, however it is not sufficient. For example if 
$A = {\small 
\begin{bmatrix} - 0.8 & - 0.81 & -1.01 \cr 1 & 0 & 0 \cr 0 & 1 & 0 \cr
\end{bmatrix}}$, then 
$$P_A(t)=t^3+0.8t^2+0.81t+1.01=(t+1)(t-0.1+i)(t-0.1-i).$$
So $P_A(t)$ has positive coefficients,
but $A$ has eigenvalues $\lambda=0.1\pm i$ which have strictly positive real parts, 
and so $A$ is not stable. 

\subsection{Digraph Cycles}

There is a direct relationship between the minors of a matrix and the cycles present in its 
digraph. If two or more cycles do not share any vertices, then we say that they are 
\textit{independent}. If the digraph of a sign pattern contains a cycle made up of 
$k$ edges, then this implies that at least one of its $k\times k$ minors is not equal 
to zero. Additionally, if there exist independent cycles of length $a_1,\ldots,a_m$,
then this implies that, if $\sum_{i=1}^{m}a_i\leq n$, at least one of its 
$(\sum_{i=1}^{m}a_i)\times(\sum_{i=1}^{m}a_i)$ minors is not equal to zero. Below are 
examples of a digraph with correct minors, and one without \\
$$
\begin{minipage}{0.2\textwidth}\begin{tikzpicture}[->,>=stealth',shorten >=1pt,auto,node distance=2cm,thick,main node/.style={circle, fill opacity=0.75,minimum size = 6pt, inner sep = 0pt}]

  \node[main node] (1) {1};
  \node[main node] (2) [right of=1] {2};
  \node[main node] (3) [below of=2] {3};
  \node[main node] (4) [left of=3] {4};

  \path
    (1) edge [loop above] node {} (1)
        edge node {} (2)
    (2) edge [bend left] node {} (3)
    (3) edge node {} (4)
        edge [bend left] node {} (2)
    (4) edge node {} (1);

\end{tikzpicture}\end{minipage} \mbox{has correct minors,} \quad
\begin{minipage}{0.2\textwidth}\begin{tikzpicture}[->,>=stealth',shorten >=1pt,auto,node distance=2cm,thick,main node/.style={circle, fill opacity=0.75,minimum size = 6pt, inner sep = 0pt}]

  \node[main node] (1) {1};
  \node[main node] (2) [right of=1] {2};
  \node[main node] (3) [below of=2] {3};
  \node[main node] (4) [left of=3] {4};

  \path
    (1) edge [loop above] node {} (1)
        edge node {} (2)
    (2) edge [loop above] node {} (2)
    	edge node {} (3)
    (3) edge node {} (4)
    (4) edge node {} (1);

\end{tikzpicture}\end{minipage} \mbox{does not have correct minors.}
$$
Therefore, if a given digraph has independent cycles whose lengths add up to $1,\ldots,n$, then we can assign signs to the entries of the corresponding matrix such that it has correct minors.

\section{Candidate Digraphs}
In this section we  construct  all candidate digraphs with $7$ vertices and $10$ edges which allows correct minors for a stable matrix. In order to better organize this list, we classify the graphs based on its circumference (the maximum length of cycle in the graph).

\medskip
\noindent\textbf{Case 1:} $circ(G)=7$.

\noindent In this case there must be at least one loop (see the minimum configuration below), and either there are at least two loops or there is exactly one loop and a 2-cycle.
\begin{center}
\begin{minipage}{0.4\textwidth}
\begin{tikzpicture}[->, >=stealth',shorten >=1pt,auto,node distance=1.2cm,thick,main node/.style={circle,fill, fill opacity=0.75,minimum size = 6pt, inner sep = 1pt}]
  \node[main node] (1) {};
  \node[main node] (2) [above right of=1] {};
  \node[main node] (7) [below right of=1] {};
  \node[main node] (3) [right of=2] {};
  \node[main node] (6) [right of=7] {};
  \node[main node] (4) [right of=3] {};
  \node[main node] (5) [right of=6] {};

  \path
    (1) edge [loop left] node {} (1)
        edge[bend left] node[above] {} (2)
    (2) edge node[above] {} (3)
	(3) edge node[above] {} (4)
	(4) edge[bend left] node[right] {} (5)
	(5) edge node[below] {} (6)
	(6) edge node[below] {} (7)
	(7) edge[bend left] node[below] {} (1);
\end{tikzpicture}
\end{minipage}
\end{center}
\medskip

\noindent\textbf{Case 1.1}: There are at least two loops. Then $9$ edges have been utilized. Suppose another edge is added to create a $k$-cycle where $k<7$. The possible sizes of nonzero minors are $1,2,k,k+1,k+2,7$ (possibly less if the $k$ cycle intersects any of the two loops.) Thus, there is at least one $3<r<7$ such that the $r\times r$ minor of the adjacency matrix is zero. Therefore $G$ is not potentially stable.\\
\textbf{Case 1.2}: There is exactly one loop and a $2$-cycle of two adjacent (numbering-wise) vertices. This utilizes $9$ edges. Suppose the remaining edge is contained in a $k$-cycle, where $2\leq k <7$. If the $2$-cycle and the loop have a vertex in common, then the possible sizes of nonzero minors are $1,2,k,k+1,k+2,7$, so we miss at least one minor, and therefore $G$ is not potentially stable. Similarly, if the $k$-cycle has a vertex in common with either the loop or the $2$-cycle, we get a non-PS adjacency matrix. Thus, the $2$-cycle, loop and $k$ cycle must be pairwise disjoint. In this case, the possible sizes of nonzero minors are $1,2,3,k,k+1,k+2,k+3,7$. Thus, $k=4$ or $k=3$. \\
In this case we have the candidate graphs as shown in Figure \ref{fig5}.1, Figure \ref{fig5}.2, Figure \ref{fig5}.3 and Figure \ref{fig5}.4.\\
\textbf{Case 1.3}: There is exactly one loop and a $2$-cycle of two non-adjacent (numbering-wise) vertices. Suppose these two additional edges create a $k$-cycle and an $r$-cycle and nonzero minors of sizes $1,2,3,k,r,r+1,7$. Thus $(k,r)=(4,5)$.\\
In this case we have the candidate graph Figure \ref{fig5}.5.
\medskip

\noindent\textbf{Case 2:} $circ(G) = 6$.

\noindent In this case there must be at least on loop. Either there is a loop on the vertex that does not belong to the $6$-cycle or there is none (see the two possible configurations below.) Two of the three edges must be utilized to make sure that the graph is strongly connected. That is, one edge must be coming from the lone vertex and one must be going to the lone vertex.

\begin{center}
\begin{tikzpicture}[->, >=stealth',shorten >=1pt,auto,node distance=1.2cm,thick,main node/.style={circle,fill, fill opacity=0.75,minimum size = 6pt, inner sep = 1pt}]
  \node[main node] (1) {};
  \node[main node] (2) [above right of=1] {};
  \node[main node] (7) [below right of=1] {};
  \node[main node] (3) [right of=2] {};
  \node[main node] (6) [right of=7] {};
  \node[main node] (4) [right of=3] {};
  \node[main node] (5) [right of=6] {};

  \path
    (1) edge [loop left] node {} (1)
    (2) edge node[above] {} (3)
	(3) edge node[above] {} (4)
	(4) edge node[right] {} (5)
	(5) edge node[below] {} (6)
	(6) edge node[below] {} (7)
	(7) edge node[below] {} (2);
\end{tikzpicture} \hspace{2cm}
\begin{tikzpicture}[->, >=stealth',shorten >=1pt,auto,node distance=1.2cm,thick,main node/.style={circle,fill, fill opacity=0.75,minimum size = 6pt, inner sep = 1pt}]
  \node[main node] (1) {};
  \node[main node] (2) [above right of=1] {};
  \node[main node] (7) [below right of=1] {};
  \node[main node] (3) [right of=2] {};
  \node[main node] (6) [right of=7] {};
  \node[main node] (4) [right of=3] {};
  \node[main node] (5) [right of=6] {};

  \path
    (2) edge[loop left] node[above] {} (2)
    	edge node[above] {} (3)
	(3) edge node[above] {} (4)
	(4) edge node[right] {} (5)
	(5) edge node[below] {} (6)
	(6) edge node[below] {} (7)
	(7) edge node[below] {} (2);
\end{tikzpicture}
\end{center}

\noindent \textbf{Case 2.1:} There is a loop in the lone vertex (say $v_1$) and another loop in another vertex. So far, we can guarantee nonzero minors of size $1,2,6,7$. For the two remaining edges, one must be outgoing from $v_1$ and one must be incoming from $v_1$. If these two edges form a $k$-cycle (which intersects a loop and the $6$-cycle), then we get nonzero minors of size $k$ and $k+1$ and nothing else. Thus $G$ will not be potentially stable. \\
\textbf{Case 2.2:} There is a loop in the lone vertex and no loop in any other vertex. Suppose the outgoing and incoming edge to the lone vertex form a $k$-cycle (which intersects the loop and the 6-cycle), with $k<7$. Then minors of size $1,k,6,7$ are nonzero. Suppose the remaining edge gives rise to another cycle of length $1<r<7$ (this means it must necessarily intersect the $6$-cycle.  This may give rise to nonzero minors of size $r,r+1$ and $r+k$ (less if the $r$-cycle also intersects with the loop or the $k$-cycle). Thus, the $r$-cycle must not intersect with the $k$-cycle and $\{k,r,r+1,r+k\}=\{2,3,4,5\}$. There is no choice but for $k=2$ and $r=3$.\\
Thus, in this case, we have the candidate graphs as in  Figure \ref{fig5}.6 and Figure \ref{fig5}.7.\\
\textbf{Case 2.3}: There is no loop in the lone vertex. Hence, there is at least one loop intersecting the $6$-cycle. Suppose the incoming and outgoing edges to the lone vertex forms a $k$-cycle, where $1<k<7$. At this point, $9$ edges have been accounted for and nonzero minors of sizes $1,6,k,k+1$. Suppose the last edge forms gives rise to an $r$-cycle, where $r<7$ that, by assumption, must intersect the $6$-cycle. If this $r$-cycle intersects the loop or the $k$-cycle, then there will be a zero minor and thus, the adjacency matrix cannot be PS. If the $r$-cycle, loop and the $k$-cycle are pairwise disjoint, then
we get additional nonzero minors of size $r,r+1,r+k,r+k+1$. We want $\{2,3,4,5,7\}\subseteq\{k,k+1,r,r+1,r+k,r+k+1\}$. Thus, either $(r,k)=(2,4)$ or $(r,k)=(4,2)$.\\
Thus, in this case, we have the candidate graphs: Figure \ref{fig5}.8 and Figure \ref{fig5}.9.
\medskip

\noindent\textbf{Case 3:} $circ(G) = 5$.

\noindent In this case a $5$-cycle uses $5$ edges, and another edge must form a loop. At least three out of the four remaining edges must be used to ensure strong connectedness of the graph. Either there is a loop intersecting the $5$-cycle or there is none (see  the two possible configurations below.)

\begin{center}
\begin{tikzpicture}[->, >=stealth',shorten >=1pt,auto,node distance=1.2cm,thick,main node/.style={circle,fill, fill opacity=0.75,minimum size = 6pt, inner sep = 1pt}]
  \node[main node] (1) {};
  \node[main node] (2) [above right of=1] {};
  \node[main node] (7) [below right of=1] {};
  \node[main node] (3) [right of=2] {};
  \node[main node] (6) [right of=7] {};
  \node[main node] (4) [right of=3] {};
  \node[main node] (5) [right of=6] {};

  \path
    (1) edge [loop left] node {} (1)
        edge[bend left] node[above] {} (2)
    (2) edge node[above] {} (3)
	(3) edge node[above] {} (6)
	(6) edge node[below] {} (7)
	(7) edge[bend left] node[below] {} (1);
\end{tikzpicture}\hspace{2.5cm}
\begin{tikzpicture}[->, >=stealth',shorten >=1pt,auto,node distance=1.2cm,thick,main node/.style={circle,fill, fill opacity=0.75,minimum size = 6pt, inner sep = 1pt}]
  \node[main node] (1) {};
  \node[main node] (2) [above right of=1] {};
  \node[main node] (7) [below right of=1] {};
  \node[main node] (3) [right of=2] {};
  \node[main node] (6) [right of=7] {};
  \node[main node] (4) [right of=3] {};
  \node[main node] (5) [right of=6] {};

  \path
    (1) edge[bend left] node[above] {} (2)
    (2) edge node[above] {} (3)
	(3) edge node[above] {} (6)
	(6) edge node[below] {} (7)
	(7) edge[bend left] node[below] {} (1)
	(4) edge [loop left] node {} (4);
\end{tikzpicture}
\end{center}

\noindent \textbf{Case 3.1:} There is a loop intersecting the $5$-cycle (the one we choose at the beginning, there may be more than one $5$-cycle).  Either there is an edge between the two remaining vertices or there is none.\\
\textbf{Subcase 3.1.1:} Suppose there is no edge connecting the the two vertices (let's call them $v_1$ and $v_2$). Then, two of the four remaining edges should connect $v_1$ to vertices in the $5$-cycle to form a $k$-cycle, where $k\leq 5$. Similarly, the remaining two edges must connect $v_2$ to vertices in the $5$-cycle to form an $r$-cycle, where $k\leq 5$. Assuming the $k$-cycle, $r$-cycle and the loop are disjoint, then we have nonzero minors of size $1,5,k,r,k+1,r+1,k+r,k+r+1$. (Note that if they are not pairwise disjoint, there will be at least minor size that will be missing.)  Thus $\{2,3,4,6,7\}\in \{k,r,k+1,r+1,k+r,k+r+1\}$. Thus $(k,r)=(2,4)$ or $(k,r)=(4,2)$.\\
Thus, we have the candidate graph: Figure \ref{fig5}.10.\\
\textbf{Subcase 3.1.2:} Suppose $v_1$ and $v_2$ form a $2$-cycle and $v_2$ is not adjacent to any vertex in the 5-cycle. So far, we have accounted for 8 edges and nonzero minors of sizes $1,2,3,5$. The two remaining edges must be incoming and outgoing from $v_1$ to make a strongly connected graph. Say these two remaining edges forms a $k$-cycle, where $k\leq 5$. This adds nonzero minors of size $k$ and $k+1$, which is not enough to make a potentially stable adjacency matrix.\\
\textbf{Subcase 3.1.3:} Suppose $v_1$ and $v_2$ are part of a $k$-cycle, with $2<k\leq 5$. So far, we have accounted for at least $9$ edges and nonzero minors of sizes $1,k,5,k+1$, where $\geq 3$. To get a nonzero $2\times 2$ minor, either there must be another loop or there is a two cycle.  Adding a loop can only guarantee at least $2$ more nonzero minor sizes. Thus, there must be a $2$-cycle in the graph. If the two cycle is disjoint from the $5$-cycle (that is, $v_1$ and $v_2$ form the $2$-cycle), we only get nonzero minors of size $\{1,2,3,5,k,k+1,7\}\neq \{1,2,3,4,5,6,7\}$. Hence the two vertices in the $2$-cycle must be part of the $5$-cycle. In this case, we get nonzero minors, $1,2,3,k,k+1,k+2,k+3,5$. Thus $k=4$.\\
Thus, we have the candidate graph: Figure \ref{fig5}.11.

\smallskip
\noindent \textbf{Case 3.2:} There is no loop intersecting the $5$-cycle. Again, either there is an edge connecting the remaining two vertices $v_1$ and $v_2$ or there is none.  \\
\textbf{Subcase 3.2.1:} There is no edge connecting the remaining two vertices $v_1$ and $v_2$. Then, two of the four remaining edges should connect $v_1$ to vertices in the $5$-cycle to form a $k$-cycle, where $k\leq 5$. Similarly, the remaining two edges must connect $v_2$ to vertices in the $5$-cycle to form an $r$-cycle, where $k\leq 5$. Assuming the $k$-cycle, $r$-cycle and the loop are disjoint, then we have nonzero minors of size $1,5,6,k,r,r+1,r+k$. Note that there is no choice of $2\leq k,r\leq 5$ that will give a complete set of nonzero minors. Thus, this case will not give a PS adjacency matrix.\\
\textbf{Subcase 3.2.2:} Suppose $v_1$ and $v_2$ form a $2$-cycle and one of $v_1$ or $v_2$ is not adjacent to any vertex in the 5-cycle. So far, we have accounted for 8 edges and nonzero minors of sizes $1,2,5,6,7$. The two remaining edges must be incoming and outgoing from $v_1,v_2$ to make a strongly connected graph. Say these two remaining edges forms a $k$-cycle, where $k\leq 5$. This adds nonzero minors of size $k$ and possibly (if the $k$-cycle does not contain the loop) $k+1$.\\
Thus, we have the following graph: Figure \ref{fig5}.12.\\
\textbf{Subcase 3.2.3:} Suppose $v_1$ and $v_2$ are part of a $k$-cycle, with $2<k\leq 5$. So far, we have accounted for at least $9$ edges and nonzero minors of sizes $1,5,6,k$, where $k\geq 3$. To get a nonzero $2\times 2$ minor, there should be another loop or a 2-cycle. Adding a loop can only guarantee at most two more sizes of nonzero minors. If there is a $2$-cycle between $v_1$ and $v_2$, we get additional nonzero minor sizes $2,7$, which is not enough for the graph to be PS. If the $2$-cycle is disjoint with the $k$-cycle, we get additional nonzero minor sizes $2,3,k+2$. This is still not enough to get a PS matrix.
\medskip

\noindent\textbf{Case 4:} $circ(G) = 4$.

\noindent Let $v_1$, $v_2$, $v_3$, $v_4$ form a 4-cycle. For each of $v_5$, $v_6$, $v_7$, there must be incoming edges $\{5,in\},\{6,in\}, \{7,in\}$ and outgoing edges $\{5,out\},\{6,out\}, \{7,out\}$. Note that 
$$\{\{5,in\},\{6,in\}, \{7,in\}\}\cap \{\{5,out\},\{6,out\}, \{7,out\}\}$$ 
must have at least 1 element since we still have to account for the loop. We can list 
down all possible  nonequivalent strongly connected graphs with less than $9$ edges and maximum cycle length 4 as follows:
\begin{center}
\begin{tikzpicture}[->, >=stealth',shorten >=1pt,auto,node distance=1.2cm,thick,main node/.style={circle,fill, fill opacity=0.75,minimum size = 6pt, inner sep = 1pt}]
  \node[main node] (1) {};
  \node[main node] (2) [below of =1] {};
  \node[main node] (3) [right of=2] {};
  \node[main node] (4) [right of=3] {};
  \node[main node] (5) [below of=4] {};
  \node[main node] (6) [left of=5] {};
  \node[main node] (7) [left of=6] {};

  \path
    (1) edge[bend left] node[above] {} (2)
    (2) edge[bend left] node[above] {} (1)
    	edge node[above] {} (3)
    (3)	edge node[above] {} (6)
    	edge node[above] {} (4)
    (4)	edge node[above] {} (5)
    (5)	edge node[above] {} (3)	
    (6)	edge node[above] {} (7)
    (7)	edge node[above] {} (2);
\end{tikzpicture}\qquad
\begin{tikzpicture}[->, >=stealth',shorten >=1pt,auto,node distance=1.2cm,thick,main node/.style={circle,fill, fill opacity=0.75,minimum size = 6pt, inner sep = 1pt}]
  \node[main node] (1) {};
  \node[main node] (2) [below of =1] {};
  \node[main node] (3) [right of=2] {};
  \node[main node] (4) [right of=3] {};
  \node[main node] (5) [below of=4] {};
  \node[main node] (6) [left of=5] {};
  \node[main node] (7) [left of=6] {};

  \path
    (1) edge[bend left] node[above] {} (2)
    (2) edge[bend left] node[above] {} (1)
    	edge node[above] {} (3)
    (3)	edge node[above] {} (6)
    (4)	edge node[above] {} (5)
    (5)	edge node[above] {} (6)	
    (6)	edge node[above] {} (7)
    	edge node[above] {} (4)
    (7)	edge node[above] {} (2);
\end{tikzpicture}\qquad
\begin{tikzpicture}[->, >=stealth',shorten >=1pt,auto,node distance=1.2cm,thick,main node/.style={circle,fill, fill opacity=0.75,minimum size = 6pt, inner sep = 1pt}]
  \node[main node] (1) {};
  \node[main node] (2) [below of =1] {};
  \node[main node] (3) [right of=2] {};
  \node[main node] (4) [right of=3] {};
  \node[main node] (5) [below of=4] {};
  \node[main node] (6) [left of=5] {};
  \node[main node] (7) [left of=6] {};

  \path
    (1) edge[bend left] node[above] {} (2)
    (2) edge[bend left] node[above] {} (1)
    	edge node[above] {} (6)
    (3)	edge node[above] {} (4)
    (4)	edge node[above] {} (5)
    (5)	edge node[above] {} (6)	
    (6)	edge node[above] {} (7)
    	edge node[above] {} (3)
    (7)	edge node[above] {} (2);
\end{tikzpicture}\medskip\\
\begin{tikzpicture}[->, >=stealth',shorten >=1pt,auto,node distance=1.2cm,thick,main node/.style={circle,fill, fill opacity=0.75,minimum size = 6pt, inner sep = 1pt}]
  \node[main node] (1) {};
  \node[main node] (2) [below of =1] {};
  \node[main node] (3) [right of=2] {};
  \node[main node] (4) [right of=3] {};
  \node[main node] (5) [below of=4] {};
  \node[main node] (6) [left of=5] {};
  \node[main node] (7) [left of=6] {};

  \path
    (1) edge[bend left] node[above] {} (2)
    (2) edge[bend left] node[above] {} (1)
    	edge node[above] {} (7)
    (3)	edge node[above] {} (4)
    	edge node[above] {} (2)
    (4)	edge node[above] {} (5)
    (5)	edge node[above] {} (6)	
    (6)	edge node[above] {} (3)
    (7)	edge node[above] {} (6);
\end{tikzpicture}\qquad \begin{tikzpicture}[->, >=stealth',shorten >=1pt,auto,node distance=1.2cm,thick,main node/.style={circle,fill, fill opacity=0.75,minimum size = 6pt, inner sep = 1pt}]
  \node[main node] (1) {};
  \node[main node] (2) [above right of=1] {};
  \node[main node] (7) [below right of=1] {};
  \node[main node] (3) [below right of=2] {};
  \node[main node] (4) [above right of=3] {};
  \node[main node] (6) [below right of=3] {};
  \node[main node] (5) [below right of=4] {};

  \path
    (1) edge node[above] {} (2)
    (2) edge node[above] {} (3)
	(3) edge node[above] {} (7)
	(7) edge node[right] {} (1)
	(3) edge node[below] {} (4)
	(4) edge node[below] {} (5)
	(5) edge node[below] {} (6)
	(6) edge node[below] {} (3);
\end{tikzpicture}
\end{center}
For the top left and middle graphs, adding a loop will give an adjacency matrix that has zero determinant. For the top rightmost graph, a loop that is disjoint from the $2$-cycle and $4$-cycle must be  added to get all nonzero minors. For the lower left graph, a loop must be added so that the loop, a $4$-cycle and a $2$-cycle are all disjoint. Finally, for the lower right graph, a loop and a $2$-cycle must be added as shown in the figure below to get nonzero minors.\\
Thus, we have the following candidate graphs:  Figure \ref{fig5}.13, Figure \ref{fig5}.14, and Figure \ref{fig5}.15.
\medskip

\noindent\textbf{Case 5:} $circ(G) = 3$.\\
By our formula, $e_{7,3}=9$, so any digraph with $7$ vertices and a circumference of $3$ must contain at least $9$ edges in order to be strongly connected. Let $v_1$, $v_2$ and $v_3$ form a $3$-cycle. Since the graph needs at least $9$ edges in order to be strongly connected, it can have at most $1$ loop, giving a total of $9+1=10$ edges. Then the graph must have a $2$-cycle as well.\\
\textbf{Case 5.1:} Suppose the $2$-cycle shares an edge with the $3$-cycle ($v_1v_2v_3$). Then, between the $2$-cycle, the $3$-cycle and the loop, we have used $5$ of the $10$ available edges. So there are $5$ edges remaining with which to connect the vertices $v_4$, $v_5$, $v_6$ and $v_7$. Each of these vertices requires at least one incoming edge and one outgoing edge. Since $circ(G)=3$, it would require at least $3$ edges in order to connect any two of the remaining vertices to the original $3$-cycle. From that point, it would require at least an additional $3$ edges in order to connect the remaining two vertices. However there are only $5$ edges available, and thus the $2$-cycle cannot share an edge with the $3$-cycle.\\
\textbf{Case 5.2:} Suppose the $2$-cycle does not share an edge with the $3$-cycle ($v_1v_2v_3$), say the $2$-cycle is ($v_4v_5$) without loss of generality. Then, between the $2$-cycle, the $3$-cycle and the loop, we have used $6$ of the $10$ available edges. So there are $4$ edges remaining with which to connect the vertices $v_6$ and $v_7$ with the cycle ($v_1v_2v_3$) and the cycle ($v_4v_5$). Since the circumference of the graph is $4$, it would require at least $3$ edges in order to connect $v_6$ and $v_7$ to either the $3$-cycle or the $2$-cycle. Then there is at most $1$ edge remaining, which is insufficient to connect the remaining separated cycles. Therefore the $2$-cycle cannot be separate from the $3$-cycle, and so there are no digraphs with $7$ vertices and a circumference of $3$ which have correct minors.

\noindent\textbf{Case 6:} $circ(G) = 2$.\\
 By our formula, $e_{7,2}=12$, so any digraph with $7$ vertices and a circumference of $2$ must contain at least $12$ edges in order to be strongly connected. However we are assuming only $10$ edges, and therefore we cannot have a circumference of $2$.

\noindent Summarizing the above discussion, we reach the main result in this section:
\begin{proposition}\label{pro3}
Suppose that $(V,E)$ is a strongly connected digraph with $7$ vertices and $10$ edges which has all non-zero minors. Then $(V,E)$ is equivalent to one of digraphs in Figure \ref{fig5}.
\end{proposition}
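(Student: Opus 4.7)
The plan is to proceed by a case analysis on $k = \text{circ}(G)$. Two general tools drive the argument. First, by the edge bound $e_{n,k}$ from the preceding theorem, together with the observation that loops can be deleted without affecting strong connectivity, the circumference is restricted by $e_{7,k} \le 10 - (\text{number of loops})$. This immediately kills $k=2$ since $e_{7,2}=12$, and leaves only $k \in \{3,4,5,6,7\}$ to analyze. Second, by the cycle-minor correspondence in Subsection 2.3, having correct minors requires that for every $s \in \{1,2,\ldots,7\}$ there is a family of pairwise vertex-disjoint cycles whose lengths sum to $s$. In particular there must be at least one loop, a $2$-cycle or a second loop (to give a nonzero $2\times 2$ minor), and so on.

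For each remaining value of $k$ I would fix a $k$-cycle (this uses $k$ edges), subtract from the edge budget, and then enumerate how the remaining $10-k$ edges can be placed so as to simultaneously (i) keep $G$ strongly connected, (ii) not produce a cycle longer than $k$, and (iii) realize every minor size $s \in \{1,\ldots,7\}$ via a disjoint-cycle family. For $k=7$ the three spare edges must consist of two loops, or one loop plus a short cycle, leading after intersection analysis to five candidates. For $k=6$ the four spare edges must reach the unique off-cycle vertex and produce the missing small minors, giving four candidates. For $k=5$, with two off-cycle vertices, I would split into subcases based on whether these two vertices are joined to each other or hung independently off the $5$-cycle, and on whether the required loop meets the $5$-cycle, yielding three candidates. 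For $k=4$, the three off-cycle vertices force constraints on incoming/outgoing multisets so that some edge must be shared; I would first enumerate the nonequivalent strongly connected skeletons with at most $9$ non-loop edges and circumference exactly $4$, then test which admit a loop placement (and, when needed, a disjoint $2$-cycle) producing every minor size, yielding three more candidates. For $k=3$, the bound $e_{7,3}=9$ leaves only a single non-loop edge after the $3$-cycle and loop, which a direct count shows is insufficient to both reach all four off-cycle vertices and supply the mandatory $2$-cycle.

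The central obstacle will be the bookkeeping at $k=4$ and $k=5$: in these cases several nonequivalent skeletons exist before the loop is even placed, and for each skeleton one must check, over all permutation-equivalence classes of loop positions, whether the loop, the $4$- or $5$-cycle, and an auxiliary short cycle can be made pairwise disjoint so that all seven minor sizes occur. Careful use of vertex relabeling (to avoid counting the same pattern twice) and of the rule that any cycle intersection between independent required cycles destroys a minor size will be essential. Assembling the surviving configurations from all cases yields precisely the list in Figure \ref{fig5}, proving the proposition.
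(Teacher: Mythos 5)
Your plan follows the paper's proof essentially verbatim: a case analysis on the circumference $k\in\{2,\ldots,7\}$, using the edge bound $e_{7,k}$ to eliminate $k=2$ and constrain $k=3$, and the disjoint-cycle characterization of the nonzero minor sizes to prune each remaining case down to the candidates of Figure \ref{fig5}, with the same candidate counts ($5,4,3,3$) in the cases $k=7,6,5,4$. The only slip is in the $k=3$ bookkeeping (after a $3$-cycle and a loop there are six edges left, not one; the paper instead shows that the five or four edges remaining after also placing the mandatory $2$-cycle cannot strongly connect the leftover vertices when every cycle has length at most $3$), but the method --- direct edge counting against $e_{7,3}=9$ --- is the same and the conclusion stands.
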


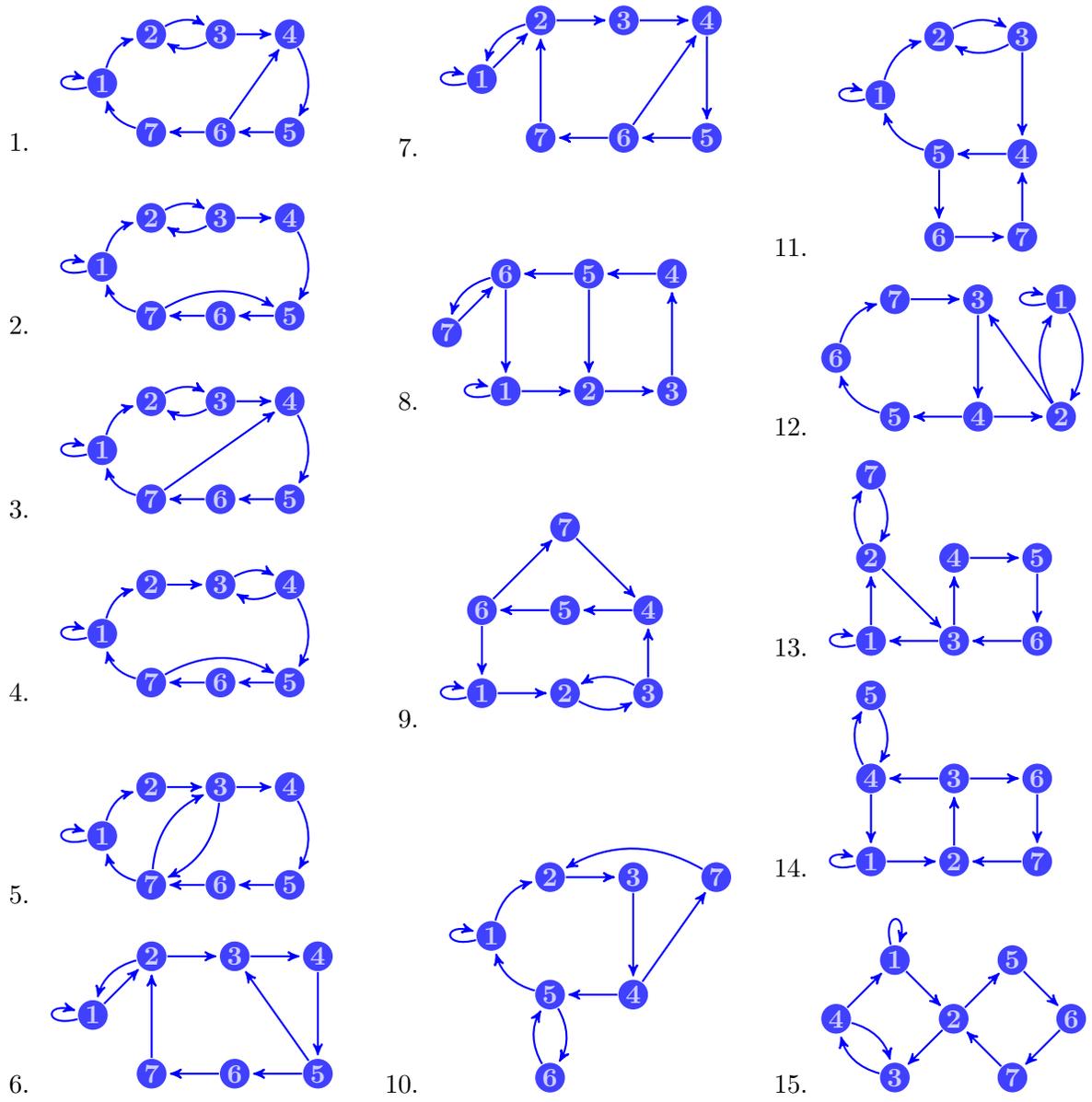
\begin{figure}
\begin{multicols}{3}
\begin{enumerate}
\item \begin{tikzpicture}[->, >=stealth',shorten >=1pt,auto,node distance=1cm,thick,main node/.style={circle,fill, fill opacity=0.75,minimum size = 6pt, inner sep = 1pt}]
  \node[color=blue,main node] (1) {\textcolor{white}{\textbf{1}}};
  \node[color=blue,main node] (2) [above right of=1] {\textcolor{white}{\textbf{2}}};
  \node[color=blue,main node] (7) [below right of=1] {\textcolor{white}{\textbf{7}}};
  \node[color=blue,main node] (3) [right of=2] {\textcolor{white}{\textbf{3}}};
  \node[color=blue,main node] (6) [right of=7] {\textcolor{white}{\textbf{6}}};
  \node[color=blue,main node] (4) [right of=3] {\textcolor{white}{\textbf{4}}};
  \node[color=blue,main node] (5) [right of=6] {\textcolor{white}{\textbf{5}}};

  \path
    (1) edge [color=blue,loop left] node {} (1)
        edge[color=blue,bend left] node[above] {} (2)
    (2) edge[color=blue,bend left] node[above] {} (3)
	(3) edge[color=blue] node[above] {} (4)
	    edge[color=blue,bend left] node[above] {} (2)
	(4) edge[color=blue,bend left] node[right] {} (5)
	(5) edge[color=blue] node[below] {} (6)
	(6) edge[color=blue] node[below] {} (7)
		edge[color=blue] node[right] {} (4)
	(7) edge[color=blue,bend left] node[below] {} (1);
\end{tikzpicture}
\item
\begin{tikzpicture}[->, >=stealth',shorten >=1pt,auto,node distance=1cm,thick,main node/.style={circle,fill, fill opacity=0.75,minimum size = 6pt, inner sep = 1pt}]
  \node[color=blue,main node] (1) {\textcolor{white}{\textbf{1}}};
  \node[color=blue,main node] (2) [above right of=1] {\textcolor{white}{\textbf{2}}};
  \node[color=blue,main node] (7) [below right of=1] {\textcolor{white}{\textbf{7}}};
  \node[color=blue,main node] (3) [right of=2] {\textcolor{white}{\textbf{3}}};
  \node[color=blue,main node] (6) [right of=7] {\textcolor{white}{\textbf{6}}};
  \node[color=blue,main node] (4) [right of=3] {\textcolor{white}{\textbf{4}}};
  \node[color=blue,main node] (5) [right of=6] {\textcolor{white}{\textbf{5}}};

  \path
    (1) edge [color=blue,loop left] node {} (1)
        edge[color=blue,bend left] node[above] {} (2)
    (2) edge[color=blue,bend left] node[above] {} (3)
	(3) edge[color=blue] node[above] {} (4)
	    edge[color=blue,bend left] node[above] {} (2)
	(4) edge[color=blue,bend left] node[right] {} (5)
	(5) edge[color=blue] node[below] {} (6)
	(6) edge[color=blue] node[below] {} (7)
	(7) edge[color=blue,bend left] node[below] {} (1)
		edge[color=blue,bend left] node[right] {} (5);
\end{tikzpicture}
\item
\begin{tikzpicture}[->, >=stealth',shorten >=1pt,auto,node distance=1cm,thick,main node/.style={circle,fill, fill opacity=0.75,minimum size = 6pt, inner sep = 1pt}]
  \node[color=blue,main node] (1) {\textcolor{white}{\textbf{1}}};
  \node[color=blue,main node] (2) [above right of=1] {\textcolor{white}{\textbf{2}}};
  \node[color=blue,main node] (7) [below right of=1] {\textcolor{white}{\textbf{7}}};
  \node[color=blue,main node] (3) [right of=2] {\textcolor{white}{\textbf{3}}};
  \node[color=blue,main node] (6) [right of=7] {\textcolor{white}{\textbf{6}}};
  \node[color=blue,main node] (4) [right of=3] {\textcolor{white}{\textbf{4}}};
  \node[color=blue,main node] (5) [right of=6] {\textcolor{white}{\textbf{5}}};

  \path
    (1) edge [color=blue,loop left] node {} (1)
        edge[color=blue,bend left] node[above] {} (2)
    (2) edge[color=blue,bend left] node[above] {} (3)
	(3) edge[color=blue] node[above] {} (4)
	    edge[color=blue,bend left] node[above] {} (2)
	(4) edge[color=blue,bend left] node[right] {} (5)
	(5) edge[color=blue] node[below] {} (6)
	(6) edge[color=blue] node[below] {} (7)
	(7) edge[color=blue,bend left] node[below] {} (1)
		edge[color=blue] node[right] {} (4);
\end{tikzpicture}
\item
\begin{tikzpicture}[->, >=stealth',shorten >=1pt,auto,node distance=1cm,thick,main node/.style={circle,fill, fill opacity=0.75,minimum size = 6pt, inner sep = 1pt}]
  \node[color=blue,main node] (1) {\textcolor{white}{\textbf{1}}};
  \node[color=blue,main node] (2) [above right of=1] {\textcolor{white}{\textbf{2}}};
  \node[color=blue,main node] (7) [below right of=1] {\textcolor{white}{\textbf{7}}};
  \node[color=blue,main node] (3) [right of=2] {\textcolor{white}{\textbf{3}}};
  \node[color=blue,main node] (6) [right of=7] {\textcolor{white}{\textbf{6}}};
  \node[color=blue,main node] (4) [right of=3] {\textcolor{white}{\textbf{4}}};
  \node[color=blue,main node] (5) [right of=6] {\textcolor{white}{\textbf{5}}};

  \path
    (1) edge [color=blue,loop left] node {} (1)
        edge[color=blue,bend left] node[above] {} (2)
    (2) edge[color=blue] node[above] {} (3)
	(3) edge[color=blue, bend left] node[above] {} (4)
	(4) edge[color=blue,bend left] node[right] {} (5)
		edge[color=blue,bend left] node[above] {} (3)
	(5) edge[color=blue] node[below] {} (6)
	(6) edge[color=blue] node[below] {} (7)
	(7) edge[color=blue,bend left] node[below] {} (1)
		edge[color=blue,bend left] node[right] {} (5);
\end{tikzpicture}\\
\item
\begin{tikzpicture}[->, >=stealth',shorten >=1pt,auto,node distance=1cm,thick,main node/.style={circle,fill, fill opacity=0.75,minimum size = 6pt, inner sep = 1pt}]
  \node[color=blue,main node] (1) {\textcolor{white}{\textbf{1}}};
  \node[color=blue,main node] (2) [above right of=1] {\textcolor{white}{\textbf{2}}};
  \node[color=blue,main node] (7) [below right of=1] {\textcolor{white}{\textbf{7}}};
  \node[color=blue,main node] (3) [right of=2] {\textcolor{white}{\textbf{3}}};
  \node[color=blue,main node] (6) [right of=7] {\textcolor{white}{\textbf{6}}};
  \node[color=blue,main node] (4) [right of=3] {\textcolor{white}{\textbf{4}}};
  \node[color=blue,main node] (5) [right of=6] {\textcolor{white}{\textbf{5}}};

  \path
    (1) edge [color=blue,loop left] node {} (1)
        edge[color=blue,bend left] node[above] {} (2)
    (2) edge[color=blue] node[above] {} (3)
	(3) edge[color=blue] node[above] {} (4)
	 	edge[color=blue,bend left] node[above] {} (7)
	(4) edge[color=blue,bend left] node[right] {} (5)
	(5) edge[color=blue] node[below] {} (6)
	(6) edge[color=blue] node[below] {} (7)
	(7) edge[color=blue,bend left] node[below] {} (1)
		edge[color=blue,bend left] node[below] {} (3);
\end{tikzpicture}
\item
\begin{tikzpicture}[->, >=stealth',shorten >=1pt,auto,node distance=1.2cm,thick,main node/.style={circle,fill, fill opacity=0.75,minimum size = 6pt, inner sep = 1pt}]
    \node[color=blue,main node] (1) {\textcolor{white}{\textbf{1}}};
  \node[color=blue,main node] (2) [above right of=1] {\textcolor{white}{\textbf{2}}};
  \node[color=blue,main node] (7) [below right of=1] {\textcolor{white}{\textbf{7}}};
  \node[color=blue,main node] (3) [right of=2] {\textcolor{white}{\textbf{3}}};
  \node[color=blue,main node] (6) [right of=7] {\textcolor{white}{\textbf{6}}};
  \node[color=blue,main node] (4) [right of=3] {\textcolor{white}{\textbf{4}}};
  \node[color=blue,main node] (5) [right of=6] {\textcolor{white}{\textbf{5}}};

  \path
    (1) edge[color=blue,loop left] node[above] {} (1)
        edge[color=blue] node[above] {} (2)
    (2) edge[color=blue] node[above] {} (3)
    	edge[color=blue,bend right] node[above] {} (1)
	(3) edge[color=blue] node[above] {} (4)
	(4) edge[color=blue] node[right] {} (5)
	(5) edge[color=blue] node[below] {} (6)
		edge[color=blue] node[below] {} (3)
	(6) edge[color=blue] node[below] {} (7)
	(7) edge[color=blue] node[below] {} (2);
\end{tikzpicture}
\item
\begin{tikzpicture}[->, >=stealth',shorten >=1pt,auto,node distance=1.2cm,thick,main node/.style={circle,fill, fill opacity=0.75,minimum size = 6pt, inner sep = 1pt}]
    \node[color=blue,main node] (1) {\textcolor{white}{\textbf{1}}};
  \node[color=blue,main node] (2) [above right of=1] {\textcolor{white}{\textbf{2}}};
  \node[color=blue,main node] (7) [below right of=1] {\textcolor{white}{\textbf{7}}};
  \node[color=blue,main node] (3) [right of=2] {\textcolor{white}{\textbf{3}}};
  \node[color=blue,main node] (6) [right of=7] {\textcolor{white}{\textbf{6}}};
  \node[color=blue,main node] (4) [right of=3] {\textcolor{white}{\textbf{4}}};
  \node[color=blue,main node] (5) [right of=6] {\textcolor{white}{\textbf{5}}};

  \path
    (1) edge[color=blue,loop left] node[above] {} (1)
        edge[color=blue] node[above] {} (2)
    (2) edge[color=blue] node[above] {} (3)
    	edge[color=blue,bend right] node[above] {} (1)
	(3) edge[color=blue] node[above] {} (4)
	(4) edge[color=blue] node[right] {} (5)
	(5) edge[color=blue] node[below] {} (6)
	(6) edge[color=blue] node[below] {} (7)
		edge[color=blue] node[below] {} (4)
	(7) edge[color=blue] node[below] {} (2);
\end{tikzpicture}
\item \begin{tikzpicture}[->, >=stealth',shorten >=1pt,auto,node distance=1.2cm,thick,main node/.style={circle,fill, fill opacity=0.75,minimum size = 6pt, inner sep = 1pt}]
    \node[color=blue,main node] (7) {\textcolor{white}{\textbf{7}}};
  \node[color=blue,main node] (6) [above right of=7] {\textcolor{white}{\textbf{6}}};
  \node[color=blue,main node] (1) [below right of=7] {\textcolor{white}{\textbf{1}}};
  \node[color=blue,main node] (5) [right of=6] {\textcolor{white}{\textbf{5}}};
  \node[color=blue,main node] (2) [right of=1] {\textcolor{white}{\textbf{2}}};
  \node[color=blue,main node] (4) [right of=5] {\textcolor{white}{\textbf{4}}};

  \node[color=blue,main node] (3) [right of=2] {\textcolor{white}{\textbf{3}}};

  \path
  	(1) edge[color=blue] node[below] {} (2)
		edge[color=blue,loop left] node[below] {} (1)
	(2) edge[color=blue] node[below] {} (3)
	(3) edge[color=blue] node[above] {} (4)
	(4) edge[color=blue] node[right] {} (5)
	(5) edge[color=blue] node[below] {} (6)
		edge[color=blue] node[below] {} (2)
	(6) edge[color=blue,bend right] node[below] {} (7)
		edge[color=blue] node[below] {} (1)
  	(7) edge[color=blue] node[above] {} (6);
\end{tikzpicture}
\item
\begin{tikzpicture}[->, >=stealth',shorten >=1pt,auto,node distance=1.2cm,thick,main node/.style={circle,fill, fill opacity=0.75,minimum size = 6pt, inner sep = 1pt}]
  \node[color=blue,main node] (6)  {\textcolor{white}{\textbf{6}}};
  \node[color=blue,main node] (1) [below of=6] {\textcolor{white}{\textbf{1}}};
  \node[color=blue,main node] (5) [right of=6] {\textcolor{white}{\textbf{5}}};
  \node[color=blue,main node] (7) [above of=5] {\textcolor{white}{\textbf{7}}};
  \node[color=blue,main node] (2) [right of=1] {\textcolor{white}{\textbf{2}}};
  \node[color=blue,main node] (4) [right of=5] {\textcolor{white}{\textbf{4}}};

  \node[color=blue,main node] (3) [right of=2] {\textcolor{white}{\textbf{3}}};

  \path
  	(1) edge[color=blue] node[below] {} (2)
		edge[color=blue,loop left] node[below] {} (1)
	(2) edge[color=blue, bend right] node[below] {} (3)
	(3) edge[color=blue] node[above] {} (4)
		edge[color=blue, bend right] node[below] {} (2)
	(4) edge[color=blue] node[right] {} (5)
	(5) edge[color=blue] node[below] {} (6)
	(6) edge[color=blue] node[below] {} (7)
		edge[color=blue] node[below] {} (1)
  	(7) edge[color=blue] node[above] {} (4);
\end{tikzpicture}
\item \begin{tikzpicture}[->, >=stealth',shorten >=1pt,auto,node distance=1.2cm,thick,main node/.style={circle,fill, fill opacity=0.75,minimum size = 6pt, inner sep = 1pt}]
  \node[color=blue,main node] (1) {\textcolor{white}{\textbf{1}}};
  \node[color=blue,main node] (2) [above right of=1] {\textcolor{white}{\textbf{2}}};
  \node[color=blue,main node] (7) [below right of=1] {\textcolor{white}{\textbf{5}}};
  \node[color=blue,main node] (3) [right of=2] {\textcolor{white}{\textbf{3}}};
  \node[color=blue,main node] (6) [right of=7] {\textcolor{white}{\textbf{4}}};
  \node[color=blue,main node] (4) [right of=3] {\textcolor{white}{\textbf{7}}};
  \node[color=blue,main node] (5) [below of=7] {\textcolor{white}{\textbf{6}}};

  \path
    (1) edge[color=blue,loop left] node {} (1)
        edge[color=blue,bend left] node[above] {} (2)
    (2) edge[color=blue] node[above] {} (3)
	(3) edge[color=blue] node[above] {} (6)
	(4) edge[color=blue,bend right] node[above] {} (2)
	(5) edge[color=blue,bend left] node[above] {} (7)		
	(6) edge[color=blue] node[below] {} (7)
	    edge[color=blue] node[below] {} (4)
	(7) edge[color=blue,bend left] node[below] {} (1)
		edge[color=blue,bend left] node[above] {} (5);
\end{tikzpicture}
\item  \begin{tikzpicture}[->, >=stealth',shorten >=1pt,auto,node distance=1.2cm,thick,main node/.style={circle,fill, fill opacity=0.75,minimum size = 6pt, inner sep = 1pt}]
  \node[color=blue,main node] (1) {\textcolor{white}{\textbf{1}}};
  \node[color=blue,main node] (2) [above right of=1] {\textcolor{white}{\textbf{2}}};
  \node[color=blue,main node] (5) [below right of=1] {\textcolor{white}{\textbf{5}}};
  \node[color=blue,main node] (3) [right of=2] {\textcolor{white}{\textbf{3}}};
  \node[color=blue,main node] (4) [right of=5] {\textcolor{white}{\textbf{4}}};
  \node[color=blue,main node] (7) [below of=4] {\textcolor{white}{\textbf{7}}};
  \node[color=blue,main node] (6) [below of=5] {\textcolor{white}{\textbf{6}}};

  \path
    (1) edge [color=blue,loop left] node {} (1)
        edge[color=blue,bend left] node[above] {} (2)
    (2) edge[color=blue,bend left] node[above] {} (3)
	(3) edge[color=blue] node[above] {} (4)
		edge[color=blue,bend left] node[above] {} (2)
	(4) edge[color=blue] node[above] {} (5)
	(5) edge[color=blue,bend left] node[above] {} (1)
		edge[color=blue] node[above] {} (6)
	(6) edge[color=blue] node[below] {} (7)
	(7) edge[color=blue] node[below] {} (4);
\end{tikzpicture}
\item \begin{tikzpicture}[->, >=stealth',shorten >=1pt,auto,node distance=1.2cm,thick,main node/.style={circle,fill, fill opacity=0.75,minimum size = 6pt, inner sep = 1pt}]
  \node[color=blue,main node] (1) {\textcolor{white}{\textbf{6}}};
  \node[color=blue,main node] (2) [above right of=1] {\textcolor{white}{\textbf{7}}};
  \node[color=blue,main node] (7) [below right of=1] {\textcolor{white}{\textbf{5}}};
  \node[color=blue,main node] (3) [right of=2] {\textcolor{white}{\textbf{3}}};
  \node[color=blue,main node] (6) [right of=7] {\textcolor{white}{\textbf{4}}};
  \node[color=blue,main node] (4) [right of=3] {\textcolor{white}{\textbf{1}}};
  \node[color=blue,main node] (5) [right of=6] {\textcolor{white}{\textbf{2}}};

  \path
    (1) edge[color=blue,bend left] node[above] {} (2)
    (2) edge[color=blue] node[above] {} (3)
	(3) edge[color=blue] node[above] {} (6)
	(6) edge[color=blue] node[below] {} (7)
	    edge[color=blue] node[below] {} (5)
	(7) edge[color=blue,bend left] node[below] {} (1)
	(4) edge [color=blue,loop left] node {} (4)
		edge[color=blue,bend left] node[below] {} (5)
	(5) edge[color=blue,bend left] node[below] {} (4)
		edge[color=blue] node[below] {} (3);
\end{tikzpicture}
\item \begin{tikzpicture}[->, >=stealth',shorten >=1pt,auto,node distance=1.2cm,thick,main node/.style={circle,fill, fill opacity=0.75,minimum size = 6pt, inner sep = 1pt}]
  \node[color=blue,main node] (1) {\textcolor{white}{\textbf{7}}};
  \node[color=blue,main node] (2) [below of =1] {\textcolor{white}{\textbf{2}}};
  \node[color=blue,main node] (3) [right of=2] {\textcolor{white}{\textbf{4}}};
  \node[color=blue,main node] (4) [right of=3] {\textcolor{white}{\textbf{5}}};
  \node[color=blue,main node] (5) [below of=4] {\textcolor{white}{\textbf{6}}};
  \node[color=blue,main node] (6) [left of=5] {\textcolor{white}{\textbf{3}}};
  \node[color=blue,main node] (7) [left of=6] {\textcolor{white}{\textbf{1}}};

  \path
    (1) edge[color= blue,bend left] node[above] {} (2)
    (2) edge[color= blue,bend left] node[above] {} (1)
    	edge[color= blue] node[above] {} (6)
    (3)	edge[color= blue] node[above] {} (4)
    (4)	edge[color= blue] node[above] {} (5)
    (5)	edge[color= blue] node[above] {} (6)	
    (6)	edge[color= blue] node[above] {} (7)
    	edge[color= blue] node[above] {} (3)
    (7)	edge[color= blue] node[above] {} (2)
    	edge[color= blue,loop left] node[above] {} (7);
\end{tikzpicture}
\item
\begin{tikzpicture}[->, >=stealth',shorten >=1pt,auto,node distance=1.2cm,thick,main node/.style={circle,fill, fill opacity=0.75,minimum size = 6pt, inner sep = 1pt}]
  \node[color=blue,main node] (1) {\textcolor{white}{\textbf{5}}};
  \node[color=blue,main node] (2) [below of =1] {\textcolor{white}{\textbf{4}}};
  \node[color=blue,main node] (3) [right of=2] {\textcolor{white}{\textbf{3}}};
  \node[color=blue,main node] (4) [right of=3] {\textcolor{white}{\textbf{6}}};
  \node[color=blue,main node] (5) [below of=4] {\textcolor{white}{\textbf{7}}};
  \node[color=blue,main node] (6) [left of=5] {\textcolor{white}{\textbf{2}}};
  \node[color=blue,main node] (7) [left of=6] {\textcolor{white}{\textbf{1}}};

  \path
    (1) edge[color=blue,bend left] node[above] {} (2)
    (2) edge[color=blue,bend left] node[above] {} (1)
    	edge[color=blue] node[above] {} (7)
    (3)	edge[color=blue] node[above] {} (4)
    	edge[color=blue] node[above] {} (2)
    (4)	edge[color=blue] node[above] {} (5)
    (5)	edge[color=blue] node[above] {} (6)	
    (6)	edge[color=blue] node[above] {} (3)
    (7)	edge[color=blue] node[above] {} (6)
    	edge[color= blue,loop left] node[above] {} (7);
\end{tikzpicture}
\item
\begin{tikzpicture}[->, >=stealth',shorten >=1pt,auto,node distance=1.2cm,thick,main node/.style={circle,fill, fill opacity=0.75,minimum size = 6pt, inner sep = 1pt}]
  \node[color=blue,main node] (1) {\textcolor{white}{\textbf{4}}};
  \node[color=blue,main node] (2) [above right of=1] {\textcolor{white}{\textbf{1}}};
  \node[color=blue,main node] (7) [below right of=1] {\textcolor{white}{\textbf{3}}};
  \node[color=blue,main node] (3) [below right of=2] {\textcolor{white}{\textbf{2}}};
  \node[color=blue,main node] (4) [above right of=3] {\textcolor{white}{\textbf{5}}};
  \node[color=blue,main node] (6) [below right of=3] {\textcolor{white}{\textbf{7}}};
  \node[color=blue,main node] (5) [below right of=4] {\textcolor{white}{\textbf{6}}};

  \path
    (1) edge[color=blue] node[above] {} (2)
    	edge[color=blue,bend left] node[above] {} (7)
    (2) edge[color=blue] node[above] {} (3)
    	edge[color=blue,loop above] node[above] {} (2)
	(3) edge[color=blue] node[above] {} (7)
	(7) edge[color=blue,bend left] node[right] {} (1)
	(3) edge[color=blue] node[below] {} (4)
	(4) edge[color=blue] node[below] {} (5)
	(5) edge[color=blue] node[below] {} (6)
	(6) edge[color=blue] node[below] {} (3);
\end{tikzpicture}
\end{enumerate}
\end{multicols}
\caption{\bf 
List of potential digraphs with $7$ vertices and $10$ edges. \label{fig5}}
\end{figure}


\section{Calculations}
We now convert the graphs from Figure \ref{fig5} into properly signed matrices, and show that none of them can be realized by a stable matrix. First however, we prove the following lemma:
\begin{lemma}\label{ineq}
Let $A$ be a $7\times7$ real-valued matrix with the characteristic polynomial
\begin{equation}\label{PA}
  P_A(t)=t^7+c_1 t^6+c_2 t^5+c_3 t^4+c_4 t^3+c_5 t^2+c_6 t+c_7.
\end{equation}
If $A$ is stable, then all of the following inequalities must hold:
\begin{enumerate}
\item $c_2c_4-c_6>0$;
\item $c_1c_2-c_3>0$;
\item $c_1c_6-c_7>0$;
\item $c_2c_5-c_7>0$.
\end{enumerate}
\end{lemma}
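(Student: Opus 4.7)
The plan is to exploit the decomposition of $P_A(t)$ into its parts of even and odd degree in $t$:
$$P_A(t) = P_e(t^2) + t \cdot P_o(t^2),$$
where $P_e(s) := c_1 s^3 + c_3 s^2 + c_5 s + c_7$ and $P_o(s) := s^3 + c_2 s^2 + c_4 s + c_6$, together with the classical Hermite--Biehler theorem. Applied to a Hurwitz polynomial of odd degree $7$, that theorem asserts that $P_A$ is Hurwitz stable if and only if $P_e$ and $P_o$ each have three distinct negative real roots and these roots interlace strictly. Writing the roots of $P_o$ as $-\rho_1 < -\rho_2 < -\rho_3 < 0$ and those of $P_e$ as $-\sigma_1 < -\sigma_2 < -\sigma_3 < 0$ (all $\rho_k, \sigma_k > 0$), the interlacing takes the specific form $0 < \sigma_1 < \rho_1 < \sigma_2 < \rho_2 < \sigma_3 < \rho_3$; in particular $\sigma_k < \rho_k$ for each $k = 1,2,3$.

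By Vieta's formulas,
$$c_2 = \sum_i \rho_i, \quad c_4 = \sum_{i<j} \rho_i \rho_j, \quad c_6 = \rho_1 \rho_2 \rho_3,$$
$$c_3 = c_1 \sum_i \sigma_i, \quad c_5 = c_1 \sum_{i<j} \sigma_i \sigma_j, \quad c_7 = c_1 \sigma_1 \sigma_2 \sigma_3,$$
so each of the four desired inequalities reduces to an elementary positivity statement about positive reals. For (1), a direct expansion yields $c_2 c_4 - c_6 = \sum_{i \ne j} \rho_i^2 \rho_j + 2 \rho_1 \rho_2 \rho_3 > 0$. Inequalities (2) and (3) are immediate from the interlacing: $c_1 c_2 - c_3 = c_1 \sum_k (\rho_k - \sigma_k) > 0$ and $c_1 c_6 - c_7 = c_1 (\rho_1 \rho_2 \rho_3 - \sigma_1 \sigma_2 \sigma_3) > 0$. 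For (4), I first bound $c_2 c_5 - c_7 = c_1 \bigl[(\sum_i \rho_i)(\sum_{i<j} \sigma_i \sigma_j) - \sigma_1 \sigma_2 \sigma_3\bigr]$ below by replacing $\sum \rho_i$ with the smaller $\sum \sigma_i$; what remains is positive by exactly the same cubic-positivity identity used in (1), now applied to the $\sigma_k$'s.

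The main obstacle is establishing the Hermite--Biehler conclusion, namely that $P_e(s)$ and $P_o(s)$ have the claimed real, negative, interlacing roots. This can be settled simply by citing the classical theorem. For a self-contained treatment I would use a winding-number argument on $P_A(iy) = P_e(-y^2) + i y P_o(-y^2)$: since a Hurwitz polynomial of degree $7$ has no purely imaginary zero, $\arg P_A(iy)$ must accumulate a net change of $7\pi$ as $y$ runs over $\mathbb{R}$, which forces the real zeros of $P_e(-y^2)$ and $yP_o(-y^2)$ to be real, simple, and to alternate on $\mathbb{R}$ (with a $yP_o$-zero at the origin), yielding both the reality/negativity of the $-\sigma_k, -\rho_k$ and the interlacing $\sigma_k < \rho_k$.
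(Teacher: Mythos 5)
Your proof is correct, but it takes a genuinely different route from the paper's. The paper factors $P_A(t)=(t^2+b_1t+a_1)(t^2+b_2t+a_2)(t^2+b_3t+a_3)(t+a_4)$ with all $a_i,b_j>0$ (using the degree-two criterion of Lemma \ref{deg2} applied to conjugate pairs and paired negative real roots), writes each $c_k$ explicitly as a polynomial in the $a_i,b_j$, and observes that each expression $c_ic_j-c_{i+j}$ expands to a sum of monomials with positive coefficients; the argument is elementary but rests on a brute-force symbolic expansion that the paper only asserts ``can be verified.'' You instead split $P_A$ into even and odd parts and invoke Hermite--Biehler to get negative real interlacing roots $\{-\rho_k\}$, $\{-\sigma_k\}$ of $P_o$ and $P_e$, after which the four inequalities drop out of short symmetric-function identities (the Newton-type inequality $e_1e_2>e_3$ for (1) and (4), termwise domination $\sigma_k<\rho_k$ for (2) and (3)). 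Your approach is more conceptual and makes transparent \emph{why} products $c_ic_j$ dominate $c_{i+j}$, at the cost of importing a classical theorem (or the winding-number argument you sketch); the paper's is self-contained modulo a mechanical computation. Two small points to fix: your displayed interlacing chain $0<\sigma_1<\rho_1<\sigma_2<\rho_2<\sigma_3<\rho_3$ is inconsistent with the labeling $-\rho_1<-\rho_2<-\rho_3<0$ (which makes $\rho_1$ the largest); relabel in increasing order, noting that the termwise bound $\sigma_k<\rho_k$, which is all you use, survives either consistent labeling. You should also record explicitly that $c_1>0$ (all coefficients of a Hurwitz polynomial are positive), both so that $P_e$ genuinely has degree $3$ and so that factoring $c_1$ out of $c_3,c_5,c_7$ preserves the signs you need.
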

\begin{proof}
By Lemma \ref{deg2}, a matrix $A$ is stable if and only if there exist $a_1,a_2,a_3,a_4,b_1,b_2,b_3>0$ such that
\begin{equation}\label{PA1}
P_A(t) = (t^2+b_1t+a_1)(t^2+b_2t+a_2)(t^2+b_3t+a_3)(t+a_4).
\end{equation}
Comparing the coefficients of \eqref{PA} and \eqref{PA1}, we have:
\begin{equation}
  \begin{split}
    c_1 =& a_4 + b_1 + b_2 + b_3, \\
    c_2 =& a_1 + a_2 + a_3 + a_4(b_1 + b_2+b_3) + b_1 b_2 + b_1 b_3 + b_2 b_3, \\
    c_3 =& a_1 a_4 + a_2 a_4 + a_3 a_4 + a_2 b_1 + a_3 b_1 + a_1 b_2 + a_3 b_2 + a_4 b_1 b_2\\
    &+a_1 b_3 + a_2 b_3 + a_4 b_1 b_3 + a_4 b_2 b_3 + b_1 b_2 b_3,\\
    c_4 =& a_1 a_2 + a_1 a_3 + a_2 a_3 + a_2 a_4 b_1 + a_3 a_4 b_1 + a_1 a_4 b_2 + a_3 a_4 b_2\\
    &+a_3 b_1 b_2 + a_1 a_4 b_3 + a_2 a_4 b_3 + a_2 b_1 b_3 + a_1 b_2 b_3 + a_4 b_1 b_2 b_3,\\
    c_5 =& a_1 a_2 a_4 + a_1 a_3 a_4 + a_2 a_3 a_4 + a_2 a_3 b_1 + a_1 a_3 b_2 + a_3 a_4 b_1 b_2\\
    &+a_1 a_2 b_3 + a_2 a_4 b_1 b_3 + a_1 a_4 b_2 b_3,\\
    c_6 =& a_1a_2a_3 + a_2a_3a_4b_1 + a_1 a_3 a_4 b_2 + a_1 a_2 a_4 b_3,\\
    c_7 =& a_1a_2a_3a_4.
  \end{split}
\end{equation}
We can verify that for each case of $c_i c_j-c_{i+j}$ listed in here,  $c_i c_j-c_{i+j}$ can be expressed as a sum of products of $a_i$ and $b_j$'s, hence $c_i c_j-c_{i+j}>0$ as all $a_i$ and $b_j$ are positive.
\end{proof}

Now we use Lemma \ref{ineq} to exclude all the $15$ digraphs (or equivalently sign patterns) in Figure \ref{fig5} to be potentially stable.

\begin{enumerate}
\item 
{\small $\begin{bmatrix}
-a_{11} & a_{12} & 0 & 0 & 0 & 0 & 0\\
0 & 0 & a_{23} & 0 & 0 & 0 & 0\\
0 & -a_{32} & 0 & a_{34} & 0 & 0 & 0\\
0 & 0 & 0 & 0 & a_{45} & 0 & 0\\
0 & 0 & 0 & 0 & 0 & a_{56} & 0\\
0 & 0 & 0 & -a_{64} & 0 & 0 & a_{67}\\
a_{71} & 0 & 0 & 0 & 0 & 0 & 0
\end{bmatrix}$}.

\medskip\noindent
Here we have:
$c_1=a_{11}$,
$c_2=a_{23}a_{32}$,
and $c_3=a_{11}a_{23}a_{32}+a_{45}a_{56}a_{64}$.
So $c_1c_2-c_3=-a_{45}a_{56}a_{64}<0$.
Thus this sign pattern is not potentially stable by Lemma \ref{ineq} part 2.
\item 
{\small $\begin{bmatrix}
-a_{11} & a_{12} & 0 & 0 & 0 & 0 & 0\\
0 & 0 & a_{23} & 0 & 0 & 0 & 0\\
0 & -a_{32} & 0 & a_{34} & 0 & 0 & 0\\
0 & 0 & 0 & 0 & a_{45} & 0 & 0\\
0 & 0 & 0 & 0 & 0 & a_{56} & 0\\
0 & 0 & 0 & 0 & 0 & 0 & a_{67}\\
-a_{71} & 0 & 0 & 0 & -a_{75} & 0 & 0
\end{bmatrix}$}.

\medskip\noindent
Here we have:
$c_1=a_{11}$,
$c_2=a_{23}a_{32}$,
$c_3=a_{11}a_{23}a_{32}+a_{56}a_{67}a_{75}$.
So $c_1c_2-c_3=-a_{56}a_{67}a_{75}<0$.
Thus this sign pattern is not potentially stable by Lemma \ref{ineq} part 2.
\item {\small $\begin{bmatrix}
-a_{11} & a_{12} & 0 & 0 & 0 & 0 & 0\\
0 & 0 & a_{23} & 0 & 0 & 0 & 0\\
0 & -a_{32} & 0 & a_{34} & 0 & 0 & 0\\
0 & 0 & 0 & 0 & a_{45} & 0 & 0\\
0 & 0 & 0 & 0 & 0 & a_{56} & 0\\
0 & 0 & 0 & 0 & 0 & 0 & a_{67}\\
\pm a_{71} & 0 & 0 & -a_{74} & 0 & 0 & 0
\end{bmatrix}$}.

\medskip\noindent
Here we have:\\
$c_2=a_{23}a_{32}$,
$c_4=a_{45}a_{56}a_{67}a_{74}$,
$c_6=a_{23}a_{32}a_{45}a_{56}a_{67}a_{74}$.
So $c_2c_4-c_6=0$.
Note that while both positive and negative values of $a_{71}$ allow for correct minors, the value of $a_{71}$ does not appear in our contradiction, and thus the contradiction holds regardless of the value of $a_{71}$. Thus this sign pattern is not potentially stable by Lemma \ref{ineq} part 1.
\item {\small $\begin{bmatrix}
-a_{11} & a_{12} & 0 & 0 & 0 & 0 & 0\\
0 & 0 & a_{23} & 0 & 0 & 0 & 0\\
0 & 0 & 0 & a_{34} & 0 & 0 & 0\\
0 & 0 & -a_{43} & 0 & a_{45} & 0 & 0\\
0 & 0 & 0 & 0 & 0 & a_{56} & 0\\
0 & 0 & 0 & 0 & 0 & 0 & a_{67}\\
-a_{71} & 0 & 0 & 0 & -a_{75} & 0 & 0
\end{bmatrix}$}.

\medskip\noindent
Here we have:
$c_1=a_{11}$,
$c_2=a_{34}a_{43}$,
$c_3=a_{11}a_{34}a_{43}+a_{56}a_{67}a_{75}$.
So $c_1c_2-c_3=-a_{56}a_{67}a_{75}<0$.
Thus this sign pattern is not potentially stable by Lemma \ref{ineq} part 2.
\item {\small $\begin{bmatrix}
-a_{11} & a_{12} & 0 & 0 & 0 & 0 & 0\\
0 & 0 & a_{23} & 0 & 0 & 0 & 0\\
0 & 0 & 0 & a_{34} & 0 & 0 & a_{37}\\
0 & 0 & 0 & 0 & a_{45} & 0 & 0\\
0 & 0 & 0 & 0 & 0 & a_{56} & 0\\
0 & 0 & 0 & 0 & 0 & 0 & a_{67}\\
-a_{71} & 0 & -a_{73} & 0 & 0 & 0 & 0
\end{bmatrix}$}.

\medskip\noindent
Here we have:
$c_1=a_{11}$,
$c_2=a_{37}a_{73}$,
$c_3=a_{11}a_{37}a_{73}$.
So $c_1c_2-c_3=0$.
Thus this sign pattern is not potentially stable by Lemma \ref{ineq} part 2.
\item {\small $\begin{bmatrix}
-a_{11} & a_{12} & 0 & 0 & 0 & 0 & 0\\
-a_{21} & 0 & a_{23} & 0 & 0 & 0 & 0\\
0 & 0 & 0 & a_{34} & 0 & 0 & 0\\
0 & 0 & 0 & 0 & a_{45} & 0 & 0\\
0 & 0 & -a_{53} & 0 & 0 & a_{56} & 0\\
0 & 0 & 0 & 0 & 0 & 0 & a_{67}\\
0 & -a_{72} & 0 & 0 & 0 & 0 & 0
\end{bmatrix}$}.

\medskip\noindent
Here we have:
$c_1=a_{11}$,
$c_6=a_{23}a_{43}a_{45}a_{56}a_{67}a_{72}$,
$c_7=a_{11}a_{23}a_{43}a_{45}a_{56}a_{67}a_{72}$.
So $c_1c_6-c_7=0$.
Thus this sign pattern is not potentially stable by Lemma \ref{ineq} part 3.
\item {\small $\begin{bmatrix}
-a_{11} & a_{12} & 0 & 0 & 0 & 0 & 0\\
-a_{21} & 0 & a_{23} & 0 & 0 & 0 & 0\\
0 & 0 & 0 & a_{34} & 0 & 0 & 0\\
0 & 0 & 0 & 0 & a_{45} & 0 & 0\\
0 & 0 & 0 & 0 & 0 & a_{56} & 0\\
0 & 0 & 0 & -a_{64} & 0 & 0 & a_{67}\\
0 & -a_{72} & 0 & 0 & 0 & 0 & 0
\end{bmatrix}$}.

\medskip\noindent
Here we have:
$c_1=a_{11}$,
$c_6=a_{23}a_{43}a_{45}a_{56}a_{67}a_{72}$,
$c_7=a_{11}a_{23}a_{43}a_{45}a_{56}a_{67}a_{72}$.
So $c_1c_6-c_7=0$.
Thus this sign pattern is not potentially stable by Lemma \ref{ineq} part 3.
\item {\small $\begin{bmatrix}
-a_{11} & a_{12} & 0 & 0 & 0 & 0 & 0\\
0 & 0 & a_{23} & 0 & 0 & 0 & 0\\
0 & 0 & 0 & a_{34} & 0 & 0 & 0\\
0 & 0 & 0 & 0 & a_{45} & 0 & 0\\
0 & -a_{52} & 0 & 0 & 0 & a_{56} & 0\\
-a_{61} & 0 & 0 & 0 & 0 & 0 & a_{67}\\
0 & 0 & 0 & 0 & 0 & -a_{76} & 0
\end{bmatrix}$}.

\medskip\noindent
Here we have:
$c_2=a_{67}a_{76}$,
$c_4=a_{23}a_{34}a_{45}a_{52}$,
$c_6=a_{67}a_{76}a_{23}a_{34}a_{45}a_{52}$.
So $c_2c_4-c_6=0$.
Thus this sign pattern is not potentially stable by Lemma \ref{ineq} part 2.
\item {\small $\begin{bmatrix}
-a_{11} & a_{12} & 0 & 0 & 0 & 0 & 0\\
0 & 0 & a_{23} & 0 & 0 & 0 & 0\\
0 & -a_{32} & 0 & a_{34} & 0 & 0 & 0\\
0 & 0 & 0 & 0 & a_{45} & 0 & 0\\
0 & 0 & 0 & 0 & 0 & a_{56} & 0\\
\pm a_{61} & 0 & 0 & 0 & 0 & 0 & a_{67}\\
0 & 0 & 0 & -a_{74} & 0 & 0 & 0
\end{bmatrix}$}.

\medskip\noindent
Here we have:
$c_2=a_{23}a_{32}$,
$c_5=a_{11}a_{45}a_{56}a_{67}a_{74}$,
$c_7=a_{23}a_{32}a_{11}a_{45}a_{56}a_{67}a_{74}$.
So $c_2c_5-c_7=0$.
Note that while both positive and negative values of $a_{61}$ allow for correct minors, the value of $a_{61}$ does not appear in our contradiction, and thus the contradiction holds regardless of the value of $a_{61}$. Thus this sign pattern is not potentially stable by Lemma \ref{ineq} part 4.
\item {\small $\begin{bmatrix}
-a_{11} & a_{12} & 0 & 0 & 0 & 0 & 0\\
0 & 0 & a_{23} & 0 & 0 & 0 & 0\\
0 & 0 & 0 & a_{34} & 0 & 0 & 0\\
0 & 0 & 0 & 0 & a_{45} & 0 & a_{47}\\
\pm a_{51} & 0 & 0 & 0 & 0 & a_{56} & 0\\
0 & 0 & 0 & 0 & -a_{65} & 0 & 0\\
0 & -a_{72} & 0 & 0 & 0 & 0 & 0
\end{bmatrix}$}.

\medskip\noindent
Here we have:
$c_2=a_{56}a_{65}$,
$c_4=a_{23}a_{34}a_{47}a_{72}$,
$c_6=a_{56}a_{65}a_{23}a_{34}a_{47}a_{72}$.
So $c_2c_4-c_6=0$.
Note that while both positive and negative values of $a_{51}$ allow for correct minors, the value of $a_{51}$ does not appear in our contradiction, and thus the contradiction holds regardless of the value of $a_{51}$. Thus this sign pattern is not potentially stable by Lemma \ref{ineq} part 2.
\item {\small $\begin{bmatrix}
-a_{11} & a_{12} & 0 & 0 & 0 & 0 & 0\\
0 & 0 & a_{23} & 0 & 0 & 0 & 0\\
0 & -a_{32} & 0 & a_{34} & 0 & 0 & 0\\
0 & 0 & 0 & 0 & a_{45} & 0 & 0\\
\pm a_{51} & 0 & 0 & 0 & 0 & a_{56} & 0\\
0 & 0 & 0 & 0 & 0 & 0 & a_{67}\\
0 & 0 & 0 & -a_{74} & 0 & 0 & 0
\end{bmatrix}$}.

\medskip\noindent
Here we have:
$c_2=a_{23}a_{32}$,
$c_4=a_{45}a_{56}a_{67}a_{74}$,
$c_6=a_{23}a_{32}a_{45}a_{56}a_{67}a_{74}$.
So $c_2c_4-c_6=0$.
Note that while both positive and negative values of $a_{51}$ allow for correct minors, the value of $a_{51}$ does not appear in our contradiction, and thus the contradiction holds regardless of the value of $a_{51}$. Thus this sign pattern is not potentially stable by Lemma \ref{ineq} part 2.
\item {\small $\begin{bmatrix}
-a_{11} & a_{12} & 0 & 0 & 0 & 0 & 0\\
-a_{21} & 0 & a_{23} & 0 & 0 & 0 & 0\\
0 & 0 & 0 & a_{34} & 0 & 0 & 0\\
0 & -a_{42} & 0 & 0 & a_{45} & 0 & 0\\
0 & 0 & 0 & 0 & 0 & a_{56} & 0\\
0 & 0 & 0 & 0 & 0 & 0 & a_{67}\\
0 & 0 & -a_{73} & 0 & 0 & 0 & 0
\end{bmatrix}$}.

\medskip\noindent
Here we have:
$c_2=a_{12}a_{21}$,
$c_5=a_{34}a_{45}a_{56}a_{67}a_{73}$,
$c_7=a_{12}a_{21}a_{34}a_{45}a_{56}a_{67}a_{73}$.
So $c_2c_5-c_7=0$.
Thus this sign pattern is not potentially stable by Lemma \ref{ineq} part 4.
\item {\small $\begin{bmatrix}
-a_{11} & a_{12} & 0 & 0 & 0 & 0 & 0\\
0 & 0 & a_{23} & 0 & 0 & 0 & -a_{27}\\
\pm a_{31} & 0 & 0 & a_{34} & 0 & 0 & 0\\
0 & 0 & 0 & 0 & a_{45} & 0 & 0\\
0 & 0 & 0 & 0 & 0 & a_{56} & 0\\
0 & 0 & -a_{63} & 0 & 0 & 0 & 0\\
0 & -a_{72} & 0 & 0 & 0 & 0 & 0
\end{bmatrix}$}.

\medskip\noindent
Here we have:
$c_2=a_{27}a_{72}$,
$c_4=a_{34}a_{45}a_{56}a_{63}$,
$c_6=a_{27}a_{72}a_{34}a_{45}a_{56}a_{63}$.
So $c_2c_4-c_6=0$.
Note that while both positive and negative values of $a_{31}$ allow for correct minors, the value of $a_{31}$ does not appear in our contradiction, and thus the contradiction holds regardless of the value of $a_{31}$. Thus this sign pattern is not potentially stable by Lemma \ref{ineq} part 2.
\item {\small $\begin{bmatrix}
-a_{11} & a_{12} & 0 & 0 & 0 & 0 & 0\\
0 & 0 & a_{23} & 0 & 0 & 0 & 0\\
0 & 0 & 0 & a_{34} & 0 & a_{36} & 0\\
\pm a_{41} & 0 & 0 & 0 & a_{45} & 0 & 0\\
0 & 0 & 0 & -a_{54} & 0 & 0 & 0\\
0 & 0 & 0 & 0 & 0 & 0 & a_{67}\\
0 & -a_{72} & 0 & 0 & 0 & 0 & 0
\end{bmatrix}$}.

\medskip\noindent
Here we have:
$c_2=a_{45}a_{54}$,
$c_5=a_{11}a_{23}a_{36}a_{67}a_{72}$,
$c_7=a_{45}a_{54}a_{11}a_{23}a_{36}a_{67}a_{72}$.
So $c_2c_5-c_7=0$.
Note that while both positive and negative values of $a_{41}$ allow for correct minors, the value of $a_{41}$ does not appear in our contradiction, and thus the contradiction holds regardless of the value of $a_{41}$. Thus this sign pattern is not potentially stable by Lemma \ref{ineq} part 4.
\item {\small $\begin{bmatrix}
-a_{11} & a_{12} & 0 & 0 & 0 & 0 & 0\\
0 & 0 & a_{23} & 0 & a_{25} & 0 & 0\\
0 & 0 & 0 & a_{34} & 0 & 0 & 0\\
\pm a_{41} & 0 & -a_{43} & 0 & 0 & 0 & 0\\
0 & 0 & 0 & 0 & 0 & a_{56} & 0\\
0 & 0 & 0 & 0 & 0 & 0 & a_{67}\\
0 & -a_{72} & 0 & 0 & 0 & 0 & 0
\end{bmatrix}$}.

\medskip\noindent
Here we have:
$c_2=a_{43}a_{34}$,
$c_5=a_{11}a_{25}a_{56}a_{67}a_{72}$,
$c_7=a_{43}a_{34}a_{11}a_{25}a_{56}a_{67}a_{72}$.
So $c_2c_5-c_7=0$.
Note that while both positive and negative values of $a_{41}$ allow for correct minors, the value of $a_{41}$ does not appear in our contradiction, and thus the contradiction holds regardless of the value of $a_{41}$. Thus this sign pattern is not potentially stable by Lemma \ref{ineq} part 4.
\end{enumerate}

\end{document}